\documentclass[12pt]{article}
\usepackage{graphics,color}
\usepackage{tikz}

\usepackage{amsmath, amsthm, amssymb}
\usepackage{graphicx}
\usepackage{enumerate}

\DeclareMathOperator{\Circ}{Circ}
\DeclareMathOperator{\m}{mod}

\newtheorem{theorem}{Theorem}
\newtheorem{lemma}[theorem]{Lemma}
\newtheorem{proposition}[theorem]{Proposition}
\newtheorem{corollary}[theorem]{Corollary}
\newtheorem{conjecture}[theorem]{Conjecture}

\theoremstyle{remark}

\theoremstyle{remark}
\newtheorem{example}{Example}
\theoremstyle{definition}
\newcommand{\f}[2]{\displaystyle \frac{#1}{#2}}

\date{June 5, 2013}

\newlength{\cfk}

\title{Finite Prime Distance Graphs and 2-Odd Graphs}

\author{Joshua D. Laison, Colin Starr, and Andrea Walker \\ Mathematics Department, Willamette University \\ 900 State St., Salem, OR 97301}

\begin{document}
\maketitle

\begin{abstract}
A graph $G$ is a prime distance graph (respectively, a 2-odd graph) if its vertices can be labeled with distinct integers such that for any two adjacent vertices, the difference of their labels is prime (either 2 or odd).  We prove that trees, cycles, and bipartite graphs are prime distance graphs, and that Dutch windmill graphs and paper mill graphs are prime distance graphs if and only if the Twin Prime Conjecture and dePolignac's Conjecture are true, respectively.  We give a characterization of 2-odd graphs in terms of edge colorings, and we use this characterization to determine which circulant graphs of the form $\Circ(n, \{1,k\})$ are 2-odd and to prove results on circulant prime distance graphs.
\end{abstract}

\noindent \textbf{Keywords:} distance graphs, prime distance graphs, difference graphs

\noindent \textbf{Mathematics Subject Classifications (2010):} 05C78, 11A41

\section{Introduction}

Prime distance graphs were introduced by Eggleton, Erd{\H{o}}s, and Skilton in 1985 \cite{Eggleton85,Eggleton86}.  For any set $D$ of positive integers, they defined the \textit{distance graph} $\mathbb{Z}(D)$ as the graph with vertex set $\mathbb{Z}$ and an edge between integers $x$ and $y$ if and only if $|x-y| \in D$.  The \textit{prime distance graph} $\mathbb{Z}(P)$ is the distance graph with $D=P$, the set of all primes.  They proved that the chromatic number $\chi(\mathbb{Z}(P))=4$.  Research in prime distance graphs has since focused on the chromatic number of $\mathbb{Z}(D)$ where $D$ is a non-empty proper subset of $P$ \cite{Eggleton88,Eggleton90,Voigt94,Yegnanarayanan02}.  Note that these graphs are all infinite (non-induced) subgraphs of $\mathbb{Z}(P)$.  In this paper we investigate finite subgraphs of $\mathbb{Z}(P)$.

Specifically, we say that a graph $G$ is a \textit{\textbf{prime distance graph}} if there exists a one-to-one labeling of its vertices $L:V(G) \to \mathbb{Z}$ such that for any two adjacent vertices $u$ and $v$, the integer $|L(u)-L(v)|$ is prime.  We also define $L(uv)=|L(u)-L(v)|$.  We call $L$ a \textit{\textbf{prime distance labeling}} of $G$, so $G$ is a prime distance graph if and only if there exists a prime distance labeling of $G$.  We sometimes denote a vertex with label $i$ by $(i)$.  Note that in a prime distance labeling, the labels on the vertices of $G$ must be distinct, but the labels on the edges need not be.  Also note that by our definition, $L(uv)$ may still be prime if $uv$ is not an edge of $G$.

We say that $G$ is \textit{\textbf{2-odd}} if for any two adjacent vertices $u$ and $v,$ $|L(u)-L(v)|$ is either odd or exactly 2, in which case $L$ is a \textit{\textbf{2-odd labeling}} of $G$.  Note that prime distance graphs are trivially 2-odd.  Corollary~\ref{odd-not-prime} below shows that not every 2-odd graph is a prime distance graph.
\bigskip

\begin{example} The path $P_n$ is 2-odd and prime-distance for each $n$.  Figure~\ref{path} shows a prime distance (and therefore 2-odd) labeling of $P_n$.
\end{example} \hfill $\square$ \vspace{.15in}
\begin{figure}[h!]
\setlength{\unitlength}{1in}
\begin{center}
\begin{tikzpicture}
\draw (0,0)node[anchor=north]{0}--(1,0)node[anchor=north]{3}--(2,0)node[anchor=north]{6}--(3,0)node[anchor=north]{9};
\begin{scope}[dashed]
\draw (3,0)--(4,0)node[anchor=north]{$3(n-1)$};
\end{scope}
\draw (4,0) -- (5,0)node[anchor=north]{$3n$};
\draw[fill=black](0,0) circle(1mm);
\draw[fill=black](1,0) circle(1mm);
\draw[fill=black](2,0) circle(1mm);
\draw[fill=black](3,0) circle(1mm);
\draw[fill=black](4,0) circle(1mm);
\draw[fill=black](5,0) circle(1mm);
\end{tikzpicture} \caption{A prime distance labeling of $P_n$.} \label{path}
\end{center}
\end{figure}
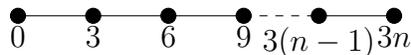

\section{Connections to Well-known Statements in Number Theory}

Surprisingly, the existence of prime distance labelings of some infinite families of graphs is closely related to several well-known statements in Number Theory.  In this section we show that all bipartite graphs are prime distance graphs using the Green-Tao Theorem; we give three separate proofs that all cycle graphs are prime distance graphs using the Goldbach Conjecture, Vinogradov's Theorem, and Ramar\'{e}'s Theorem, respectively; and we show that Dutch windmill graphs and paper mill graphs are prime distance graphs if and only if the Twin Prime Conjecture and dePolignac's Conjecture are true, respectively.

Recall that an \textit{\textbf{arithmetic progression}} of positive integers is a sequence of positive integers such that the differences between successive terms of the sequence is a constant.
\bigskip

\noindent \textbf{The Green-Tao Theorem.} \cite{green08} \textit{For any positive integer $k$, there exists a prime arithmetic progression of length $k$.}
\bigskip

\begin{theorem} Every bipartite graph is a prime distance graph. \label{bipartite} \end{theorem}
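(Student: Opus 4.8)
The plan is to reduce the statement to complete bipartite graphs. If $G$ is bipartite with parts $A$ and $B$ of sizes $m$ and $n$, then $G$ is a subgraph of $K_{m,n}$ on the same vertex set, so any labeling with distinct labels in which every $A$-to-$B$ difference is prime is automatically a prime distance labeling of $G$: deleting edges only removes constraints. Hence it suffices to produce, for every $m$ and $n$, distinct integers $x_1,\dots,x_m$ (for the vertices of $A$) and $y_1,\dots,y_n$ (for the vertices of $B$) with $|x_i-y_j|$ prime for all $i$ and $j$.

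The key observation is that if I place both parts on a common arithmetic lattice with the same common difference $D$, then the set of cross-differences $x_i-y_j$ is itself a single arithmetic progression of common difference $D$. Concretely, I would set $x_i = c+(n-1)D+(i-1)D$ and $y_j=(j-1)D$; then $x_i-y_j = c+(n-1+i-j)D$, and as $(i,j)$ ranges over all pairs the coefficient $n-1+i-j$ runs through exactly the integers $0,1,\dots,m+n-2$. So every cross-difference equals a term of the length-$(m+n-1)$ arithmetic progression $c,\ c+D,\ \dots,\ c+(m+n-2)D$.

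At this point I invoke the Green--Tao Theorem with $k=m+n-1$: it supplies a prime arithmetic progression of that length, whose first term $c$ and common difference $D$ I feed into the construction above. Every cross-difference $x_i-y_j$ is then one of these primes, in particular a positive prime, so $|x_i-y_j|=x_i-y_j$ is prime as required. Distinctness is immediate, since $D>0$ forces the $x_i$ to be distinct and the $y_j$ to be distinct, and no $x_i$ can equal any $y_j$ because their difference is a nonzero prime.

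The substance of the argument is really the reduction to $K_{m,n}$ together with the lattice bookkeeping; the only nontrivial ingredient is the existence of arbitrarily long prime arithmetic progressions, which is precisely what Green--Tao provides, so I do not anticipate a genuine obstacle once the indices are lined up correctly. The one point I would verify carefully is the index arithmetic guaranteeing that $n-1+i-j$ attains every value in $\{0,\dots,m+n-2\}$ and no value outside it, so that the progression I request has length exactly $m+n-1$.
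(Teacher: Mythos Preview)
Your proof is correct and follows essentially the same approach as the paper: reduce to $K_{m,n}$, invoke Green--Tao to obtain a prime arithmetic progression of length $m+n-1$, and place both partite sets on the common lattice of step $D$ so that every cross-difference is a term of that progression. Your version is in fact a bit more careful than the paper's in explicitly verifying injectivity of the labeling and the range of the index $n-1+i-j$.
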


\begin{proof} We show that the graph $K_{r,s}$ is a prime distance graph.  Since every subgraph of a prime distance graph is also a prime distance graph, this proves that every bipartite graph is a prime distance graph.  By the Green-Tao Theorem, there is an arithmetic sequence of $r+s-1$ primes $p-(r-1)k$, $p-(r-2)k$, $\ldots$, $p-k$, $p$, $p+k$, $\ldots$, $p+(s-2)k$, $p+(s-1)k$.  Let $A$ and $B$ be the partite sets of $G$ with $|A|=r$ and $|B|=s.$  Label the members of $B$ with the labels $p$, $p+k$, $\ldots$, $p+(s-1)k$ and the members of $A$ with the labels $0,k,2k,\ldots, (r-1)k.$  Then differences between members of $A$ and members of $B$ are all of the form $p+nk$, with $n\in\{-(r-1)$, $-(r-2)$, $\ldots$, -1, 0, 1, $\ldots$, $s-2$, $s-1\},$ and each such $p+nk$ is prime. \end{proof}

In other words, every 2-chromatic graph is a prime distance graph.  However, we note that not every 3-chromatic graph is a prime distance graph since $K_{3,3,3}$ is not a prime-distance graph.  In fact, $K_{3,3,3}$ is not 2-odd.
\bigskip

\noindent \textbf{Goldbach's Conjecture.} \cite{Burton11,Rosen10} \textit{Every even number greater than $2$ is the sum of two primes.}
\bigskip

\begin{theorem}
  If Goldbach's Conjecture is true, then every cycle is a prime distance graph.
\end{theorem}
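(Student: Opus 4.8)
The plan is to label the cycle $C_n$ with vertices $v_0, v_1, \ldots, v_{n-1}$ in order so that all $n$ edge-differences are prime, including the ``wraparound'' edge from $v_{n-1}$ back to $v_0$. The natural first move is to use odd primes for the edges, and the simplest odd prime is $3$: if I could label $v_i$ with $3i$ as in the path example, all consecutive differences would be $3$, but the wraparound difference $|L(v_{n-1})-L(v_0)| = 3(n-1)$ is typically not prime. So the real content is controlling that one last edge, and this is where Goldbach enters.

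\medskip

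First I would split into cases based on the parity of $n$. If $n$ is even, the cycle is bipartite, so Theorem~\ref{bipartite} already applies and there is nothing to prove; thus I may assume $n$ is odd. For odd $n \geq 3$, I would aim to realize the cycle as a path $0, p_1, p_1+p_2, \ldots$ whose partial sums of edge-labels return to a value whose distance from $0$ is prime. Concretely, the cleanest approach is: choose the edge-labels to be $n-1$ copies of an odd prime together with one more edge whose label closes the cycle, and force the total ``signed travel'' around the cycle to be expressible via Goldbach. A convenient arrangement is to label so that consecutive differences are all equal to some fixed odd prime except for the final edge. The sum of the first $n-1$ edge differences (with appropriate signs) must equal the negative of the last edge difference so that we return to the start.

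\medskip

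Here is the construction I expect to work. Place labels $0, 2, 4, \ldots$ along part of the cycle using steps that are prime, and use Goldbach to handle the closing edge. Since $n$ is odd, $n-1$ is even; if $n - 1 > 2$, then $2(n-1)$ is an even number greater than $2$, so by Goldbach's Conjecture we may write it as a sum of two primes, $2(n-1) = q + r$ with $q, r$ prime. The idea is to traverse the cycle so that the accumulated displacement along $n-2$ of the edges uses a single repeated odd prime, while the two remaining edges carry labels $q$ and $r$ arranged so the net displacement is zero. For instance, go ``up'' by a fixed odd prime on most edges and then use the two Goldbach primes $q$ and $r$ on the last two edges, one increasing and one decreasing, so that the labels return exactly to $0$. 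I would verify that the resulting vertex labels are distinct and that every edge-difference lands in $\{q, r\} \cup \{\text{the repeated odd prime}\}$, all of which are prime.

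\medskip

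The main obstacle will be the bookkeeping that guarantees \emph{distinctness} of the labels while simultaneously closing the cycle: it is easy to make all edge-differences prime and easy to return to $0$, but arranging both at once without any label collision requires care, especially for the smallest odd cycles ($n = 3, 5$) where there is little room to maneuver. I therefore expect to treat a few small odd values of $n$ as explicit base cases, and to give the Goldbach-based construction for the general odd $n$, checking in the general case that the monotone portion of the labeling keeps the bulk of the labels separated and that the two Goldbach edges do not reintroduce a previously used label.
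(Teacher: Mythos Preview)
Your overall strategy---walk along the cycle with a constant prime step and invoke Goldbach on the accumulated displacement to choose the last two edge-labels---is exactly the paper's, but three of your specifics are wrong and would not yield a proof as written.

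First, the repeated step cannot be a ``fixed odd prime.'' If you step by an odd prime $p$ along $n-2$ edges with $n$ odd, the displacement $(n-2)p$ is odd, and Goldbach says nothing about odd numbers; and your ``one increasing, one decreasing'' with primes $q,r$ produces the condition $r-q=(n-2)p$, a prime-\emph{gap} statement, not a Goldbach one. The repeated step must be $2$: label $v_0,\dots,v_{n-2}$ by $0,2,\dots,2(n-2)$, so the even number to split is $2(n-2)=2n-4$, not your $2(n-1)$. Goldbach then gives $2n-4=p_1+p_2$; set $L(v_{n-1})=p_1$, so the final two edges carry $p_2=|2(n-2)-p_1|$ and $p_1=|p_1-0|$---both \emph{decreasing}, not one of each.

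Second, once you use step $2$, the distinctness you flag as ``the main obstacle'' is immediate rather than delicate: the labels $0,2,\dots,2n-4$ are distinct even integers, and for $n\ge 6$ we have $2n-4\ge 8$, so $p_1,p_2$ cannot both equal $2$; taking $p_1$ odd makes it automatically different from every even label. The paper handles $n=3,4,5$ by explicit labelings and runs this construction for all $n\ge 6$, with no parity split. Your detour through Theorem~\ref{bipartite} for even $n$ is logically valid but gratuitously imports the Green--Tao theorem where a direct Goldbach argument already covers that case.
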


\begin{proof}
If $n=3$, then $C_n$ can be prime distance labeled with labels $0$, $3$, and $5$.  If $n=4$ then $C_n$ can be prime distance labeled with labels $0$, $3$, $8$, and $11$.  If $n=5,$ then $C_n$ can be prime distance labeled with labels $0, 3, 6, 9,$ and $11$.  Suppose $n \geq 6$, and write $2n-4$ as the sum of two primes, say $2n-4=p_1+p_2$.  Then $C_n$ can be prime distance labeled with labels $0$, $2$, $\ldots$, $2n-4$, and $p_1$ in cyclic order.  Since $2n-4$ is even and at least $6$, then $p_1$ must be odd, so the vertex labels are distinct.
\end{proof}

\noindent \textbf{Vinogradov's Theorem.} \cite{Ramachandra97,Vinogradov04} \textit{Every sufficiently large odd number is the sum of 3 primes.}
\bigskip

\begin{theorem} Every cycle is a prime distance graph. \label{cycle-theorem}
\end{theorem}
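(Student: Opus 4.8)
The plan is to imitate the Goldbach-based construction of the preceding theorem, but to close up the cycle with \emph{three} prime-length edges supplied by Vinogradov's Theorem instead of the two edges that Goldbach supplies, while disposing of the finitely many small cycles by explicit labelings. Since Vinogradov's Theorem only guarantees a three-prime representation of \emph{sufficiently large} odd numbers, I would first handle the bounded range: for each $n$ below the Vinogradov threshold I would exhibit an explicit prime distance labeling of $C_n$ by hand (for example continuing the pattern of the labelings $0,3,5$; $\,0,3,8,11$; $\,0,3,6,9,11$ used above). This is a finite computation and so is unproblematic.

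For large $n$, set $N=2n-5$ (odd) and use Vinogradov's Theorem to write $N=p_1+p_2+p_3$; because $N$ is a large odd number I may assume all three primes are odd. I would then place $n-2$ of the vertices on a ``staircase'' climbing from $0$ up to $N$ using $n-4$ steps of size $2$ together with a single step of size $3$, and close the cycle from the top label $N$ back down to $0$ through two new vertices $\alpha=N-p_1$ and $\beta=p_3$, so that the three closing edges have the prime lengths $p_1$, $p_2$, $p_3$. With this layout every edge difference is prime by construction, so the entire content reduces to checking that the $n$ labels are distinct.

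That distinctness check is exactly where the main obstacle lies. A parity count forces one of the two closing vertices, namely $\alpha$, to be even and the other, $\beta$, to be odd; meanwhile the staircase (all steps $2$ except the one step $3$) occupies an initial block of \emph{even} labels $0,2,\dots,2j$ followed by a block of \emph{odd} labels $2j+3,2j+5,\dots,N$, with the single $+3$ step serving as the hinge between them. The key idea is that I am free to choose \emph{where} to put that $+3$ step, i.e.\ how large the even block is. By selecting the position $j$ appropriately I can force the even value $\alpha$ to land above the even block and the odd value $\beta$ to land below the odd block, so that each falls into a ``hole'' left uncovered by the staircase. A valid $j$ exists precisely when $p_1+p_3 = N-p_2 \le 2n-8$, and this inequality holds automatically; so a workable location for the $+3$ step can always be found.

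Assembling these pieces yields a prime distance labeling of $C_n$ for all large $n$, which together with the explicit small cases proves the theorem. In writing the argument out I would also verify the boundary conditions $\alpha,\beta \notin \{0,N\}$ and $\alpha\neq\beta$; these follow from $p_i\ge 3$ and from $\alpha$ and $\beta$ having opposite parity. I expect the parity/placement bookkeeping of the previous paragraph to be the only genuinely delicate part of the proof, the rest being routine once the construction is fixed.
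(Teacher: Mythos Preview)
Your large-$n$ construction is correct and is a genuine variant of the paper's argument. The paper also closes the cycle with three prime edges supplied by Vinogradov's Theorem, but it does so differently: it introduces an auxiliary large prime $p$, writes the odd number $p+2n-8$ as $p_1+p_2+p_3$ with $p_1\ge p_2\ge p_3$, and labels $C_n$ by $0,2,\ldots,2n-8,\;p+2n-8,\;p_1+p_2,\;p_1$ in cyclic order. Distinctness is then immediate from size (the hypothesis $p>4n$ forces $p_1>2n-8$, so the three closing labels all sit strictly above the staircase), with no need for your $+3$ hinge or parity bookkeeping. Your approach trades that extra prime for a more delicate placement argument; both are valid.

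The genuine gap in your proposal is the treatment of ``small'' $n$. Vinogradov's Theorem, as quoted in the paper, applies only to \emph{sufficiently large} odd numbers, and the implied threshold is either ineffective or, in the effective versions available, astronomically large. Declaring that you will exhibit labelings ``by hand'' for every $n$ below that threshold is not a proof but a promise to carry out a computation of (in practice) unimaginable size. The paper's auxiliary prime $p$ is exactly the device that removes this difficulty: for \emph{every} $n\ge 4$ one simply chooses $p$ large enough that $p+2n-8$ exceeds the Vinogradov bound, so no case analysis is required at all. If instead you are willing to cite Helfgott's result on the ternary Goldbach conjecture (which the paper mentions only as a then-recent announcement), every odd $N\ge 7$ is a sum of three primes, your threshold collapses to $n\ge 6$, and your argument goes through with only a handful of truly small cycles to check.

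One minor inaccuracy: the existence of a valid hinge position $j\in\{0,\dots,n-4\}$ is governed by $p_3\le 2n-7$, not literally by $p_1+p_3\le 2n-8$. However, your inequality $p_1+p_3=N-p_2\le 2n-8$ does hold automatically (since $p_2\ge 3$) and forces $p_3\le 2n-11$, so your conclusion that a valid $j$ always exists is still correct.
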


\begin{proof}
If $n=3$, then $C_n$ can be prime distance labeled with labels $0$, $3$, and $5$.  Suppose $n \geq 4$, and let $p$ be a prime number large enough such that by Vinogradov's Theorem, $p+2n-8$ can be written as the sum of 3 primes, say $p+2n-8=p_1+p_2+p_3$.  Also assume $p>4n$ and $p_1 \geq p_2 \geq p_3$, so in particular $p_1>2n-8$.  Then $C_n$ can be prime distance labeled with labels $0$, $2$, $\ldots$, $2n-8$, $p+2n-8$, $p_1+p_2$, and $p_1$ in cyclic order.  Since $p_1>2n-8$, the vertex labels are distinct.
\end{proof}
\bigskip

\noindent \textbf{Ramar\'{e}'s Theorem.} \cite{Ramare95} \textit{Every even number is the sum of at most $6$ primes.}
\bigskip

\begin{theorem}
Every cycle is a prime distance graph.
\end{theorem}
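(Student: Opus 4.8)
The plan is to imitate the two preceding proofs: dispose of the small cycles by exhibiting explicit labelings, and for large $n$ climb through consecutive even integers and then ``close up'' the cycle with a bounded number of prime steps supplied by Ramar\'{e}'s Theorem. Concretely, I would list the labels in cyclic order as $0, 2, 4, \ldots, 2m$ followed by a short run that descends back to $0$. Every edge along the climb has difference $2$, which is prime, so all of the work is in the closing run.

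Since the total height climbed, $2m$, is even, Ramar\'{e}'s Theorem lets me write $2m = p_1 + p_2 + \cdots + p_t$ with $t \le 6$, and I then close the cycle by the descent $2m,\ 2m - p_1,\ 2m - p_1 - p_2,\ \ldots,\ 0$, whose consecutive differences are exactly $p_1, \ldots, p_t$. The vertex and edge counts are automatically consistent for every admissible $t$: the climb uses $m+1$ vertices and $m$ edges, the descent contributes $t-1$ new vertices and $t$ edges, so the cycle has $m+t$ vertices and $m+t$ edges, and setting $m = n - t$ produces $C_n$ no matter which $t \le 6$ Ramar\'{e} delivers. A pleasant feature, by contrast with the Goldbach and Vinogradov arguments, is that I can keep the parity under control by hand: the quantity handed to the number-theoretic input is the even climb, never an odd target.

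The main obstacle is distinctness of the labels, precisely the point the Vinogradov proof dispatched with the remark ``$p_1 > 2n-8$.'' In the naive descent the intermediate vertices $2m - (p_1 + \cdots + p_j)$ can be even and can fall inside $[0, 2m]$, colliding with climb labels. I would repair this exactly as in the Vinogradov proof, by shifting the entire closing run far above the climb: insert \emph{two} large odd prime jumps $2m \to 2m + P \to 2m + P + P'$, so that the quantity fed to Ramar\'{e} is the even number $Q := 2m + P + P'$, and rebalance the length by taking $m = n - t - 2$ to absorb the two extra vertices and edges. Ordering the Ramar\'{e} primes so that the largest is used last forces the lowest special vertex to equal the largest tail $p_t \ge Q/6 > 2m$ once $P, P'$ are large, after which every non-climb label exceeds $2m$ and is therefore distinct from the (bounded, even) climb labels, while the special labels are mutually distinct because the tails $\sum_{i > j} p_i$ form a strictly decreasing run.

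What remains is the routine bookkeeping I would defer: choosing the two large primes so that the jump vertex $2m + P$ avoids the finitely many tails of the decomposition of $Q$, and verifying the governing inequalities for all sufficiently large $n$, together with checking the finitely many small cycles directly. I expect the delicate step to be exactly this choice of the two auxiliary primes, since the decomposition of $Q$ depends on $P'$; but the abundance of large primes against only finitely many coincidences to dodge should make it go through, yielding a prime distance labeling of $C_n$ for every $n$.
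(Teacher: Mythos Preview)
Your overall shape mirrors the paper's Ramar\'{e} proof, but there is a genuine gap: a circular dependency between $m$ and $t$. You propose to ``set $m = n - t$'' (later $m = n - t - 2$) so that the vertex count comes out to $n$, but $t$ is the number of primes Ramar\'{e} returns for the even number $2m$ (later $2m + P + P'$), and that number cannot be computed until $m$ is already fixed. Ramar\'{e}'s theorem only promises $t \le 6$; it does not let you prescribe $t$, and there is no reason any particular choice of $m$ should produce a decomposition with exactly the matching value of $t$. The phrase ``no matter which $t \le 6$ Ramar\'{e} delivers'' hides the problem: once $m$ is chosen, only one specific $t$ makes the cycle have length $n$, and you have no control over whether Ramar\'{e} cooperates.

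The paper breaks this circularity by decoupling the Ramar\'{e} target from $t$. It takes a single large prime $p > 10n$, fixes the target as $2n - 5 + p$ once and for all, applies Ramar\'{e}, and then handles each possible outcome $t \in \{2,\dots,6\}$ in a separate case by adjusting the climb: the last few steps of size $2$ are replaced by steps of size $3$, $5$, or $7$ so that the climb still terminates at the fixed value $2n-5$ but now uses exactly $n - t$ vertices. The odd endpoint $2n-5$ also makes the target even with only one auxiliary prime, and since the largest Ramar\'{e} prime satisfies $p_1 \ge (2n-5+p)/6 > 2n - 1$, every partial sum in the descent automatically exceeds every climb label; distinctness is then immediate and the two-prime juggling you anticipate becomes unnecessary.
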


\begin{proof}
If $3 \leq n \leq 7$ then $C_n$ can be prime distance labeled using any of the techniques above.  Suppose $n \geq 8$, let $p$ be a prime number larger than $10n$, and write $2n-5+p$ as the sum of at most $6$ primes, $p_1$ through $p_i$, where $2 \leq i \leq 6$.  Again assume $p_1$ is the largest of these primes, so $p_1>2n-5$.  Then we have $5$ cases:
\bigskip

\noindent \textbf{Case 1. $2n-5+p=p_1+p_2$.} Then $C_n$ can be prime distance labeled with labels $0$, $2$, $4, \ldots, 2n-8, 2n-5, 2n-5+p$, and $p_1$.
\bigskip

\noindent \textbf{Case 2. $2n-5+p=p_1+p_2+p_3$.} Then $C_n$ can be prime distance labeled with labels $0, 2, 4, \ldots, 2n-10, 2n-5$, $2n-5+p$, $p_1+p_2$, and $p_1$.
\bigskip

\noindent \textbf{Case 3. $2n-5+p=p_1+p_2+p_3+p_4$.} Then $C_n$ can be prime distance labeled with labels $0, 2, 4, \ldots, 2n-12, 2n-5$, $2n-5+p$, $p_1+p_2+p_3$, $p_1+p_2$, and $p_1$.
\bigskip

\noindent \textbf{Case 4. $2n-5+p=p_1+p_2+p_3+p_4+p_5$.} Then $C_n$ can be prime distance labeled with labels $0, 2, 4, \ldots, 2n-18, 2n-15, 2n-10, 2n-5$, $2n-5+p$, $p_1+p_2+p_3+p_4$, $p_1+p_2+p_3$, $p_1+p_2$, and $p_1$.
\bigskip

\noindent \textbf{Case 5. $2n-5+p=p_1+p_2+p_3+p_4+p_5+p_6$.} Then $C_n$ can be prime distance labeled with labels $0, 2, 4, \ldots, 2n-20, 2n-15, 2n-10, 2n-5$, $2n-5+p$, $p_1+p_2+p_3+p_4+p_5$, $p_1+p_2+p_3+p_4$, $p_1+p_2+p_3$, $p_1+p_2$, and $p_1$.

In each case, since $p_1>2n-5$, the vertex labels are distinct.
\end{proof}

Recently, an announcement was made of a proof of the Weak Goldbach Conjecture \cite{Helfgott13}.  The authors believe this theorem could also be used to prove that every cycle is a prime distance graph.

The \textit{\textbf{Dutch windmill graph}} $D_n$ or \textit{\textbf{friendship graph}} is the star $S_{2n}$ with central vertex $v_0$ and leaves $v_1$ through $v_{2n}$, with an edge between each consecutive pair of vertices $v_{2k-1}$ and $v_{2k}$, $1 \leq k \leq n$.  So $D_n$ has $n$ copies of $C_3$ joined at the common vertex $v_0$ \cite{Gallian98,Erdos66}.  Figure~\ref{windmill} shows $D_5$.
\begin{figure}
\begin{center}
\includegraphics{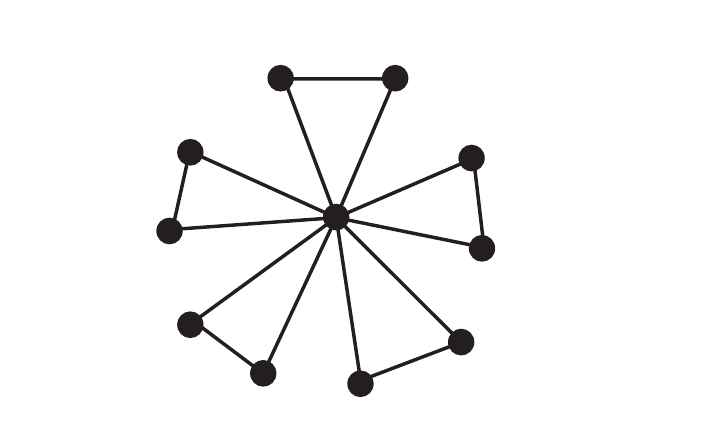}
\caption{The Dutch windmill graph $D_5$.}\label{windmill}
\end{center}
\end{figure}
\bigskip

\noindent \textbf{The Twin Prime Conjecture.} There are infinitely many pairs of primes that differ by 2.
\bigskip

\begin{theorem}
Every Dutch windmill graph is a prime distance graph if and only if the Twin Prime Conjecture is true.
\end{theorem}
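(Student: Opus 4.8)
The plan is to handle the two implications separately, with the structural heart of the argument being the behavior of a single triangle at the common hub $v_0$.

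For the ``if'' direction I would assume the Twin Prime Conjecture and construct a labeling directly. Setting $L(v_0)=0$, I observe that labeling the two leaves $v_{2k-1},v_{2k}$ of the $k$-th triangle with a twin prime pair $p_k$ and $p_k+2$ makes all three edge differences of that triangle prime, namely $p_k$, $p_k+2$, and $2$. Since the Twin Prime Conjecture supplies infinitely many twin prime pairs, I can choose $n$ of them whose $2n$ values are all distinct (this is possible because there are infinitely many pairs to draw from, so one may take them with $p_1<p_2<\cdots$ and $p_{k+1}>p_k+2$), and all values are positive and hence different from $0$. Assigning these to the $n$ triangles yields a prime distance labeling of $D_n$ for every $n$.

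The ``only if'' direction is the substantive one, and rests on a parity observation about each triangle. Given any prime distance labeling, the signed differences around a triangle $v_0\to v_{2k-1}\to v_{2k}\to v_0$ sum to $0$; since three odd integers cannot sum to $0$, at least one edge difference must be even and hence equal to $2$, and a short check (two leaves at $x$-values $2$ and $-2$ differ by $4$) shows two of them cannot both equal $2$. Thus in every triangle exactly one edge difference is $2$ and the remaining two are odd primes differing by $2$ --- a twin prime pair. Writing $x_i = L(v_i)-L(v_0)$ for each leaf, injectivity of $L$ forces the $x_i$ to be distinct, so at most one leaf has $x_i=2$ and at most one has $x_i=-2$; consequently at most two triangles can carry their ``$2$'' on a spoke edge $v_0 v_i$.

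Every remaining triangle must therefore carry its ``$2$'' on the rim edge $v_{2k-1}v_{2k}$, which pins its two leaves to the $x$-values $\{p,p+2\}$ or $\{-p,-(p+2)\}$ for some twin prime pair $(p,p+2)$. If only $M$ twin prime pairs existed, there would be only $2M$ such leaf-pairs, and distinctness of the labels lets each be realized by at most one triangle, giving $n\le 2M+2$. Hence if $D_n$ is a prime distance graph for every $n$, no finite bound can hold and there must be infinitely many twin primes. I expect the main obstacle to be the bookkeeping in this last step: translating the finiteness of twin primes into a hard cap on the number of triangles by exploiting the single shared hub, with the parity dichotomy as the enabling device.
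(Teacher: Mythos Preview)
Your proposal is correct and follows essentially the same approach as the paper: normalize $L(v_0)=0$, observe that in all but at most two triangles the edge labeled $2$ must be the rim edge, and conclude that each such triangle forces its two spoke labels to be a twin prime pair. The paper reaches the ``at most two exceptional triangles'' conclusion by the parity of the leaf labels (at most two leaves can be even, namely those labeled $\pm 2$), whereas you reach it via the signed cycle-sum in each triangle; your write-up is also more explicit about the final counting bound $n\le 2M+2$, which the paper leaves implicit.
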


\begin{proof}
First assume that $D_n$ has a prime distance labeling for any positive integer $n$, and consider one such prime distance labeling of $D_n$.  Without loss of generality, assume that $v_0$ is labeled with 0.  Note that there must be at most two more even labels on the remaining vertices, so at least $n-2$ of the triangles in $D_n$ have two odd labels.  In each of these triangles, since both odd-labeled vertices are adjacent to $v_0$, their labels must be prime.  Since their difference is even and prime, it must be 2, so each triangle is labeled with a pair of twin primes.  That is, if $D_n$ is prime distance, there are at least $n-2$ twin primes.  Therefore, if all Dutch windmill graphs are prime distance graphs, then the Twin Prime conjecture is true.

Conversely, if the Twin Prime Conjecture is true, then the labeling with $0$ on $v_0$ and a pair of twin primes on each triangle is a prime distance labeling of $D_n$.
\end{proof}

We construct the paper mill graphs as follows.  First, the \textit{\textbf{(triangular) book graph}} $B_5$ is the tripartite graph $K_{1,1,5}$ consisting of $5$ triangles sharing a common edge, the \textit{\textbf{spine}} of the book \cite{West96}.  The \textit{\textbf{stack of books}} $S_k$ is a union of $k$ copies of $B_5$ joined so that their spines form a path, as shown in Figure~\ref{book}.  Then let the \textit{\textbf{paper mill graph}} $M_{n,k}$ be the graph constructed from $D_n$ by replacing each edge $uv$ not incident to the center vertex $v_0$ by a copy of $S_k$ at vertices $a$ and $b$.
\bigskip

\begin{figure}
\begin{center}
\includegraphics[width=5in]{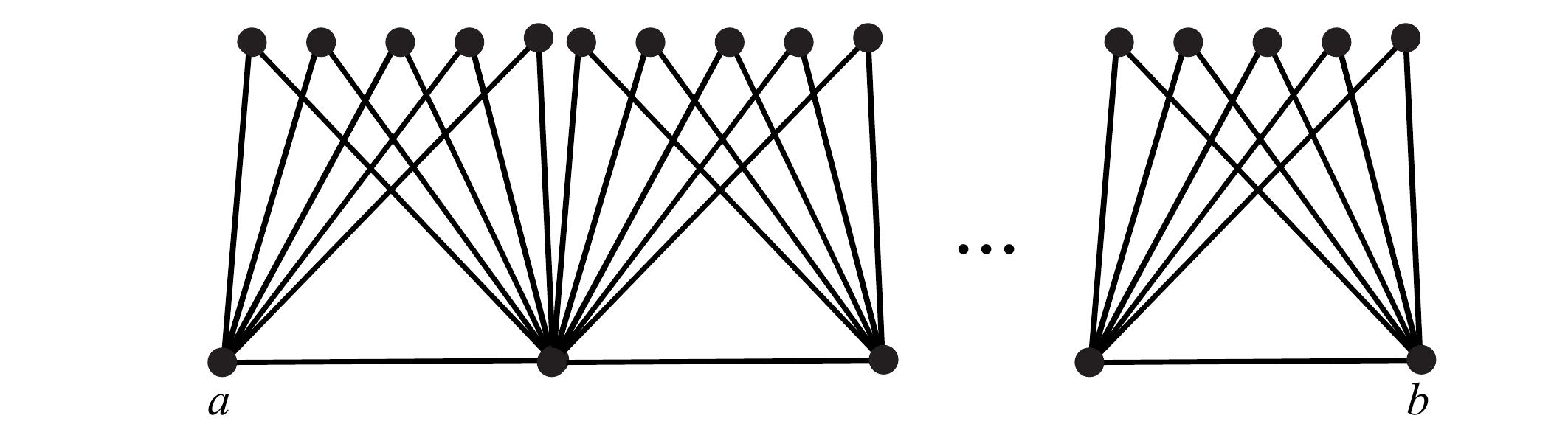}
\caption{The stack of books $S_k$.}\label{book}
\end{center}
\end{figure}
\bigskip

\noindent \textbf{dePolignac's Conjecture.} \cite{Dickson66,polignac} \textit{For any positive even integer $2k$, there exist infinitely many pairs of consecutive primes that differ by $2k$.}
\bigskip

\begin{lemma}
In any prime distance labeling of $S_k$, the labels on the vertices $a$ and $b$ differ by exactly $2k$. \label{stack-lemma}
\end{lemma}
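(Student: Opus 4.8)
The plan is to set up coordinates on the spine. Label the spine path of $S_k$ as $a = s_0, s_1, \ldots, s_k = b$, with spine edges $s_{i-1}s_i$ for $1 \le i \le k$, and recall that the $i$-th book contributes five page vertices, each adjacent to both $s_{i-1}$ and $s_i$ (since $B_5 = K_{1,1,5}$). The claim $|L(a)-L(b)| = 2k$ then reduces to two sub-claims: (i) every spine edge has endpoints whose labels differ by exactly $2$, and (ii) the consecutive spine differences all have the same sign, so they accumulate to $2k$ rather than cancel.

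For sub-claim (i), I would argue by parity and a counting step. Fix a spine edge $s_{i-1}s_i$ and suppose, for contradiction, that $|L(s_{i-1}) - L(s_i)|$ is odd, i.e. the two spine labels have opposite parity. Let $z$ be any page vertex of this book. Because $L(s_{i-1})$ and $L(s_i)$ differ in parity, exactly one of $L(z) - L(s_{i-1})$ and $L(z) - L(s_i)$ is even; it is a nonzero edge difference, hence prime, so it equals $2$. Thus $L(z) \in \{L(s_{i-1}) - 2,\, L(s_{i-1}) + 2,\, L(s_i) - 2,\, L(s_i) + 2\}$, a set of at most four integers. But the book has five page vertices bearing distinct labels, so they cannot all fit into this four-element set, a contradiction. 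Hence $|L(s_{i-1}) - L(s_i)|$ is even, and being prime it equals $2$. This overflow of five pages into four available labels is the crux of the whole argument, and it is exactly why the construction is built from $K_{1,1,5}$ rather than a smaller book; I expect this counting step to be the main obstacle to get exactly right.

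For sub-claim (ii), I would use only that $L$ is one-to-one. By (i) each step satisfies $L(s_i) - L(s_{i-1}) = \pm 2$. If two consecutive steps had opposite signs, say $L(s_i) = L(s_{i-1}) + 2$ while $L(s_{i+1}) = L(s_i) - 2$, then $L(s_{i+1}) = L(s_{i-1})$, contradicting the distinctness of labels on the distinct vertices $s_{i-1}$ and $s_{i+1}$. Therefore all $k$ steps carry a common sign, and the telescoping sum $\sum_{i=1}^{k}\bigl(L(s_i) - L(s_{i-1})\bigr) = L(s_k) - L(s_0)$ gives $|L(b) - L(a)| = 2k$, as claimed. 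Once sub-claim (i) is secured, sub-claim (ii) is immediate, and the lemma follows.
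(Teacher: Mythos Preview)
Your proof is correct and follows essentially the same idea as the paper's. The paper invokes (forward-referenced) Corollary~\ref{odd-cycle} to say that each of the five triangles in $B_5$ has exactly one edge labeled~$2$, then observes that a single vertex can be incident to at most two such edges, so the five ``$2$''-edges cannot all be page edges and the spine edge must carry the~$2$; your sub-claim~(i) is the same pigeonhole count, just run directly via parity without quoting the corollary. Your sub-claim~(ii) makes explicit the telescoping/sign argument that the paper leaves implicit after asserting that every spine edge is labeled~$2$.
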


\begin{proof}
By Corollary~\ref{odd-cycle} below, in any prime distance labeling of $B_5$, each 3-cycle must have exactly one edge labeled $2$.  But at most two of these edges can be incident to any single vertex, so the spine of $B_5$ must be labeled $2$.  Therefore all the edges in the path between $a$ and $b$ are labeled $2$.
\end{proof}

\begin{theorem}
Every paper mill graph is a prime distance graph if and only if dePolignac's Conjecture is true.
\end{theorem}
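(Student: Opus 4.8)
The plan is to prove both directions of the biconditional in the spirit of the preceding Dutch windmill theorem, with Lemma~\ref{stack-lemma} as the essential gadget: each stack $S_k$ that replaces an outer edge forces its two attachment vertices to receive labels differing by exactly $2k$. Throughout I fix $k$, and in any prime distance labeling I normalize $L(v_0)=0$. For the $j$-th petal write $a_j,b_j$ for the two triangle vertices joined to $v_0$ that serve as the attachment vertices of that stack. The guiding observation is that dePolignac's Conjecture supplies precisely the two number-theoretic ingredients the graph needs: prime pairs of gap $2k$ to seat the $a_j,b_j$, and (through the instance $2k=2$) infinitely many twin primes to label the interior pages of the books.

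For the forward direction I would mimic the windmill argument. Since $a_j$ and $b_j$ are both adjacent to $v_0=0$, their labels are prime, and Lemma~\ref{stack-lemma} forces $|L(a_j)-L(b_j)|=2k$. As this difference is even the two labels share a parity; were they both even they would both be even primes, hence the pair $\{2,-2\}$, which can occur for at most one petal and only when $k=2$. Thus all but at most one petal carries a pair of odd primes at distance $2k$, and letting $n\to\infty$ over the family $M_{n,k}$ produces infinitely many prime pairs of gap $2k$, for every $k$.

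For the converse I would build an explicit labeling assuming dePolignac. Set $L(v_0)=0$, and for the $j$-th petal choose primes $q_j,\,q_j+2k$ with the $q_j$ spread far apart; put $L(a_j)=q_j$, $L(b_j)=q_j+2k$, and label the spine path monotonically by twos, $L(s_i)=q_j+2i$, so every spine edge has difference $2$ and the endpoints differ by $2k$ as required. It then remains to label the five pages of each book with spine $s_{i-1}s_i$: a page label $x$ must make both $|x-(q_j+2(i-1))|$ and $|x-(q_j+2i)|$ prime, and since these two quantities differ by $2$ they must form a twin prime pair. Using the instance $2k=2$ of dePolignac to guarantee infinitely many twin primes, I can choose offsets (for example $-5,-3,5,7$ together with arbitrarily large twin-prime offsets) yielding $5k$ \emph{distinct} page labels per stack, while spacing the petals far apart keeps all labels in the graph distinct.

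The main obstacle is twofold. First, in the backward direction an honest count shows that one stack of $k$ books requires $5k$ pairwise distinct page labels, each arising from a twin prime; if only finitely many twin primes existed the available labels would grow only like $2k$ and could not accommodate $5k$ pages once $k$ is large, so the $2k=2$ instance of dePolignac is genuinely indispensable and is the real engine of the construction. Second, the delicate point in the forward direction is matching the exact wording of dePolignac: the argument above manufactures primes at distance $2k$, and the care required is to argue that these pairs may be taken to be \emph{consecutive} primes, which is where the statement must be handled most carefully.
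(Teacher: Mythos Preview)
Your approach mirrors the paper's exactly: normalize $L(v_0)=0$, invoke Lemma~\ref{stack-lemma} to force $|L(a_j)-L(b_j)|=2k$ in the forward direction, and for the converse seat the spines on prime pairs of gap $2k$ and the pages on twin primes (drawn from the $2k=2$ instance of dePolignac). You are in fact more scrupulous than the paper on both of the points you flag. The paper's forward argument simply asserts that the endpoint labels are ``primes'' and concludes dePolignac, without your parity/sign bookkeeping and, more importantly, without addressing the word \emph{consecutive}: the graph only manufactures infinitely many primes at distance $2k$, not infinitely many consecutive primes at that distance, so the gap you identify is present verbatim in the paper's own proof. Likewise, in the converse the paper appeals to twin primes for the page labels just as you do, but does not spell out the count $5k$ per stack or the distinctness argument; your analysis there is a genuine refinement rather than a departure.
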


\begin{proof}
First assume that $M_{n,k}$ has a prime distance labeling for any positive integers $n$ and $k$, and consider one such prime distance labeling of $M_{n,k}$.  Without loss of generality, assume that the center vertex $v_0$ is labeled with 0.  Then all the vertices adjacent to $v_0$ are labeled with primes.  By Lemma~\ref{stack-lemma}, the ends of the paths on each $S_k$ make up $n$ pairs of primes such that each pair differs by $2k$.  Therefore, if all paper mill graphs are prime distance graphs, dePolignac's Conjecture is true.

Conversely, assume that dePolignac's Conjecture is true; note that this also implies the Twin Prime Conjecture.  We construct a prime distance labeling of $M_{n,k}$ for given positive integers $n$ and $k$.  Label $v_0$ with $0$ and the $i$th path of spines by $p_i$, $p_i+2$, $\ldots$, $p_i+2k$, where $p_i$ and $p_i+2k$ are prime.  We choose each $p_i$ so that $p_i$ is sufficiently large that labels do not repeat.  Then label the five additional vertices in the $j$th book with distinct integers so that the five pairs of edges are twin primes; do this for each book along $S_i$ without repeating labels.  (This is possible by assumption.)  Repeat this for each $S_i.$
\end{proof}

\section{Edge Colorings}

Given a prime distance (respectively, 2-odd) graph, we color red the edges labeled with 2 and blue the edges labeled with odd primes (odd integers).  Conversely, given a graph $G$ with edges colored red and blue (an \textit{\textbf{edge-colored}} graph), a prime distance (respectively, 2-odd) labeling of $G$ is \textit{\textbf{color-satisfying}} if the label on every red edge is 2 and the label on every blue edge is prime (odd).  We say that an edge-coloring of $G$ is a \textit{\textbf{prime distance coloring}} (respectively, \textit{\textbf{$2$-odd coloring}}) if there exists a color-satisfying prime distance labeling ($2$-odd labeling) of $G$.

Suppose that $G$ is edge-colored, and for two vertices $u$ and $v$ of $G$, the edge $uv$ is red.  If $L$ is a color-satisfying prime distance (respectively, 2-odd) labeling of $G$, then $L(uv)=2$, so we may assume without loss of generality that
$L(u)=0$ and $L(v)=2$.

The following proposition shows that not every color-satisfying 2-odd labeling will lead to a prime-distance labeling.  Throughout this paper, red and blue edges will be represented by thick and thin edges, respectively.
\bigskip

\begin{proposition} The graph $G$ shown in Figure \ref{2-odd-a} is a prime distance graph but has a 2-odd coloring that is not a prime-distance coloring.
\end{proposition}

\begin{proof}
First note that by Theorem~\ref{n3-theorem} below, $G$ is a prime distance graph.  Now the first labeling of $G$ shown in Figure~\ref{2-odd-a} is a color-satisfying 2-odd labeling.  Note that $G$ has two red paths, each with $5$ vertices.  If there were a prime-distance labeling $L$ of $G$, without loss of generality, one of these paths would be labeled with $0$, $2$, $4$, $6$, and $8$, and the other with $p$, $p \pm 2$, $p \pm 4$, $p \pm 6$, and $p \pm 8$, where $p$ is a prime or the negative of a prime, as shown in the second labeling in Figure \ref{2-odd-b}.  However, note that $p$ is adjacent to $0$, $4$, and $8$, which constitute all of the equivalence classes modulo $3$.  Thus one of these three distances must be divisible by $3$ and prime, and therefore exactly $3$.  So $p$ must be one of the numbers $-3$, $3$, $1$, $7$, $5$, or $11$.  But each of these values of $p$ yields a non-prime edge: if $p=-3$, then $L((p+4)(0))=1$ and $L((p-4)(2))=9$; if $p=3$ then $L((p)(4))=1$; if $p=1$ then $L((p)(0))=1$; if $p=7$ then $L((p)(8))=1$; if $p=5$ then $L((p)(4))=1$; if $p=11$ then $L((p-4)(8))=1$ and $L((p+4) (0))=15$.
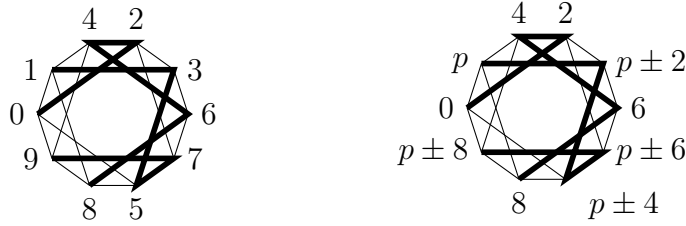
\begin{figure}
\begin{center}
\setlength{\unitlength}{0.5in}
\begin{tikzpicture}
 \draw (0.69,1.90) --(0.19,1.54)--(0,0.95)--(0.19,0.36)--(0.69,0)--(1.31,0)--(0,0.95);
 \draw (0.69,1.90)--(0.19,0.36);
 \draw (0.19,1.54)--(0.69,0);
 \draw (2,0.95)--(1.81,1.54)--(1.31,1.90)--(1.81,0.36)--(2,0.95);
 \draw[line width=0.075cm] (0,0.95)node[anchor=east]{0}--(1.31,1.90)node[anchor=south]{2}--(0.69,1.90)node[anchor=south]{4}--(2,0.95)node[anchor=west]{6}--(0.69,0)
 node[anchor=north]{8};
 \draw[line width=0.075cm] (0.19,1.54)node[anchor=east]{$1$}--(1.81,1.54)node[anchor=west]{$3$}--(1.31,0)node[anchor=north]{$5$}--(1.81,0.36)
 node[anchor=west]{$7$}--(0.19,0.36)node[anchor=east]{$9$};
\end{tikzpicture}\nolinebreak \hspace{2cm}
\begin{tikzpicture}(2,2)\thicklines
 \draw (0.69,1.90) --(0.19,1.54)--(0,0.95)--(0.19,0.36)--(0.69,0)--(1.31,0)--(0,0.95);
 \draw (0.69,1.90)--(0.19,0.36);
 \draw (0.19,1.54)--(0.69,0);
 \draw (2,0.95)--(1.81,1.54)--(1.31,1.90)--(1.81,0.36)--(2,0.95);
 \draw[line width=0.075cm] (0,0.95)node[anchor=east]{0}--(1.31,1.90)node[anchor=south]{2}--(0.69,1.90)node[anchor=south]{4}--(2,0.95)node[anchor=west]{6}--(0.69,0)
 node[anchor=north]{8};
 \draw[line width=0.075cm] (0.19,1.54)node[anchor=east]{$p$}--(1.81,1.54)node[anchor=west]{$p\pm 2$}--(1.31,0)node[anchor=north west]{$~p\pm 4$}--(1.81,0.36)
 node[anchor=west]{$p\pm 6$}--(0.19,0.36)node[anchor=east]{$p\pm 8$};
\end{tikzpicture}
\caption{A color-satisfying 2-odd labeling and a failed attempt at a color-satisfying prime-distance labeling of the same graph} \label{2-odd-b}\label{2-odd-a}
\end{center}
\end{figure}
\end{proof}

\begin{lemma} \label{red-cycle} Suppose $G$ is an edge-colored graph with a color-satisfying 2-odd labeling $L$.  Then every cycle in $G$ has a positive even number of blue edges.
\end{lemma}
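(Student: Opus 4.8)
The plan is to combine a simple parity computation with the distinctness of the labels, handling the ``even'' and ``positive'' parts of the conclusion separately. Let $C = v_0 v_1 \cdots v_{m-1} v_0$ be an arbitrary cycle in $G$, with $m \geq 3$, and form the signed consecutive differences $d_i = L(v_{i+1}) - L(v_i)$, where the indices are read modulo $m$. The starting observation is that these differences telescope around the closed cycle, so $\sum_{i=0}^{m-1} d_i = 0$; this identity is what ties the individual edge parities together.

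Next I would reduce that identity modulo $2$. Because $L$ is color-satisfying, every red edge has $|d_i| = 2$ and hence $d_i \equiv 0 \pmod 2$, while every blue edge has $d_i$ odd and hence $d_i \equiv 1 \pmod 2$. Writing $b$ for the number of blue edges of $C$, we therefore get $0 = \sum_i d_i \equiv b \pmod 2$, which shows at once that $b$ is even. This part is pure bookkeeping and carries no real difficulty.

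The remaining task, and the one step I expect to need a genuine (if modest) idea rather than parity counting, is to rule out $b = 0$, i.e. to show that no cycle can be entirely red. I would argue by contradiction: if every edge of $C$ is red, then consecutive labels differ by exactly $2$. Choosing a vertex $v_j$ of $C$ whose label is minimal among the vertices of $C$, each of its two cycle-neighbors must be labeled $L(v_j) \pm 2$, and minimality forces both to be $L(v_j) + 2$. Since $m \geq 3$, these are two distinct vertices bearing the same label, contradicting the injectivity of $L$. Hence $b \geq 1$, and combined with the parity result this gives that $b$ is a positive even number, as claimed.
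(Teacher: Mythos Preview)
Your proof is correct. The parity half is essentially identical to the paper's argument, just phrased algebraically via the telescoping identity $\sum_i d_i = 0$ rather than in words about labels changing parity across blue edges.

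For the positivity half you take a genuinely different route. The paper assumes the cycle is all red, normalizes so that $L(v_1)=0$ and $L(v_2)=2$, and then uses distinctness of labels to force $L(v_i)=2(i-1)$ inductively; the closing edge $v_k v_1$ then has label $2k-2$, which is neither $2$ nor odd. Your minimality trick is tidier: a vertex of minimum label on an all-red cycle would have both cycle-neighbors labeled exactly $2$ above it, and since $m\ge 3$ these neighbors are distinct, contradicting injectivity. Your version avoids the inductive walk around the cycle and the normalization step, at the cost of not revealing what the labels would actually have to be; the paper's version makes that explicit but needs a couple more lines. Either way the content is the same modest observation.
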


\begin{proof}
Suppose $C=v_1 v_2 \ldots v_k$ is a cycle in $G$.  If $v_i v_{i+1}$ is red, then $L(v_i)$ and $L(v_{i+1})$ have the same parity, and if $v_i
v_{i+1}$ is blue, then $L(v_i)$ and $L(v_{i+1})$ have different parity.  The parity of the labels of a cycle must change an even number of
times, so $C$ has an even number of blue edges.  Now suppose by way of contradiction that $C$ has no blue edges.  Since $v_1 v_2$ is red, we may
assume without loss of generality that $L(v_1)=0$ and $L(v_2)=2$.  Then since vertex labels are not repeated, $L(v_3)=4$, $L(v_4)=6$, $\ldots,$
$L(v_k)=2k-2$.  Since $k \geq 3$, $L(v_1 v_k)=2k-2$ is neither 2 nor odd.
\end{proof}

\begin{corollary}In an edge-colored graph with a color-satisfying 2-odd labeling, every odd cycle has at least one red edge, and every 3-cycle has exactly one red edge. \label{odd-cycle}
\end{corollary}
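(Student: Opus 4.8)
The plan is to derive both assertions directly from Lemma~\ref{red-cycle}, which guarantees that every cycle in an edge-colored graph with a color-satisfying 2-odd labeling has a \emph{positive even} number of blue edges. The single unifying observation is that in any cycle the number of red edges equals the total number of edges minus the number of blue edges, so a parity count on the blue edges immediately controls the red edges.

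For the first statement, I would take an odd cycle $C$, which by definition has an odd number of edges. Lemma~\ref{red-cycle} tells us the number of blue edges of $C$ is even. Since (red edges) $=$ (total edges) $-$ (blue edges) $=$ (odd) $-$ (even), the number of red edges is odd, and in particular at least one. This already gives more than claimed, namely that every odd cycle has an odd (hence nonzero) number of red edges.

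For the second statement, I would specialize to a $3$-cycle, which has exactly three edges. By Lemma~\ref{red-cycle} its number of blue edges is a positive even integer, and since it cannot exceed the total of three edges, it must equal exactly $2$. Consequently the number of red edges is $3-2=1$, as desired.

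I do not expect any genuine obstacle here, since the corollary is a short counting consequence of the lemma. The only point demanding care is the use of the word \emph{positive} in Lemma~\ref{red-cycle}: without it, a $3$-cycle could in principle have $0$ blue edges, which would break the ``exactly one red edge'' conclusion; the positivity is precisely what forces the blue count to be $2$ rather than $0$ in the three-edge case.
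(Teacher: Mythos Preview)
Your proof is correct and follows exactly the approach implicit in the paper, which states the corollary immediately after Lemma~\ref{red-cycle} without further argument. Your parity count---that an odd cycle minus an even number of blue edges leaves an odd (hence positive) number of red edges, and that a $3$-cycle with a positive even number of blue edges must have exactly $2$ blue and $1$ red---is precisely the intended one-line deduction.
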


Since every prime distance labeling is also a 2-odd labeling, Lemma~\ref{red-cycle} and Corollary~\ref{odd-cycle} hold for prime distance labelings as well.

The following lemma will be useful also.  Recall that the \textit{\textbf{symmetric difference}} $H_1 \triangle H_2$ of two subgraphs $H_1$ and $H_2$ of a graph $G$ is the graph with an edge $e$ in $H_1 \triangle H_2$ if $e$ is in $H_1$ or $H_2$, but not both \cite{West96}.  For convenience, we ignore isolated vertices in $H_1 \triangle H_2$.  Note that if $C_1$ and $C_2$ are cycles in $G$, then $C_1 \triangle C_2$ is a union of edge-disjoint cycles in $G$.
\bigskip

\begin{lemma} \label{sym-diff} If $C_1$ and $C_2$ are cycles in an edge-colored graph $G$ that both have a positive even number of blue edges, then
$C_1\triangle C_2$ has a positive even number of blue edges, as well. \end{lemma}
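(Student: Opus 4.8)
The plan is to prove the claim by directly counting blue edges rather than by analyzing the cyclic structure. Writing $b(H)$ for the number of blue edges of a subgraph $H$, I would classify each blue edge of $C_1 \cup C_2$ according to whether it lies in $C_1$ alone, in $C_2$ alone, or in both. Since an edge belongs to $C_1 \triangle C_2$ precisely when it lies in exactly one of the two cycles, this bookkeeping yields $b(C_1 \triangle C_2) = b(C_1) + b(C_2) - 2\,b(C_1 \cap C_2)$. The hypothesis that $b(C_1)$ and $b(C_2)$ are even, together with the fact that $2\,b(C_1 \cap C_2)$ is visibly even, immediately forces $b(C_1 \triangle C_2)$ to be even. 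This settles the parity half of the statement using nothing beyond the given blue-edge counts and makes no reference to how the edges close up into cycles.

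It remains to show that this even number is strictly positive, i.e.\ that $C_1 \triangle C_2$ has at least one blue edge. From the identity above, $b(C_1 \triangle C_2) = 0$ can occur only when $b(C_1 \cap C_2) = b(C_1) = b(C_2)$, that is, only when $C_1$ and $C_2$ carry exactly the same blue edges and differ solely in their red edges. In that degenerate case $C_1 \triangle C_2$, which the preceding remark guarantees is a nonempty union of edge-disjoint cycles, would consist entirely of red edges. I would rule this out by appealing to Lemma~\ref{red-cycle}: every cycle of $G$ contains a positive number of blue edges, so no cycle, and in particular no component of $C_1 \triangle C_2$, can be all red. Hence the degenerate case cannot arise and $b(C_1 \triangle C_2) \geq 1$; combined with evenness, $b(C_1 \triangle C_2) \geq 2$.

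I expect positivity to be the real obstacle. The parity count is routine and needs only the hypotheses at hand, but strict positivity cannot follow from counting alone: two cycles can share precisely their blue edges while their differing red arcs close up into a single all-red cycle, so one genuinely must invoke the structural fact that all-red cycles do not occur. The key insight is therefore to pin down that the vanishing case forces $C_1$ and $C_2$ to agree on every blue edge, hence forces an all-red symmetric difference, since it is exactly this reduction that makes the appeal to Lemma~\ref{red-cycle} clean.
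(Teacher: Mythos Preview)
Your parity argument is correct and is exactly what the paper does, just packaged as the identity $b(C_1 \triangle C_2) = b(C_1) + b(C_2) - 2\,b(C_1 \cap C_2)$ rather than a two-case split on the parity of $b(C_1 \cap C_2)$.

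The positivity argument, however, has a real gap. You invoke Lemma~\ref{red-cycle} to rule out all-red cycles, but that lemma carries a hypothesis you do not have here: it assumes $G$ admits a color-satisfying 2-odd labeling, whereas Lemma~\ref{sym-diff} is stated for an arbitrary edge-colored graph. Without that extra hypothesis positivity is in fact false. For a concrete counterexample, let $C_1$ be the 4-cycle $a\,b\,c\,d$ with $ab,cd$ blue and $bc,da$ red, and let $C_2$ be the 6-cycle $a\,b\,e\,c\,d\,f$ with $ab,cd$ blue and the remaining four edges red; then $C_1 \triangle C_2$ is the pair of all-red triangles $bce$ and $adf$, with zero blue edges. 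Even more simply, $C_1 = C_2$ gives an empty symmetric difference, so your claim that $C_1 \triangle C_2$ is \emph{nonempty} is also unwarranted---the remark preceding the lemma says only that the symmetric difference is a union of edge-disjoint cycles, not a nonempty one. It is worth noting that the paper's own proof addresses only evenness and says nothing about positivity, so the lemma is somewhat overstated; in its sole application (Theorem~\ref{2-odd-circ-char}) the relevant symmetric difference is an honest cycle of the graph and positivity is recovered from context. Your instinct that strict positivity needs a separate structural input is exactly right---you just cannot source that input from Lemma~\ref{red-cycle} under the hypotheses as stated.
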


\begin{proof} If $C_1$ and $C_2$ have an odd number of blue edges in common, then they each have an odd number of blue edges that are not shared; thus,
the number of blue edges in $C_1 \triangle C_2$ is even.  If they have an even number of blue edges in common, then they also both have an
even number that are not shared, and $C_1 \triangle C_2$ again has an even number of blue edges. \end{proof}

Note that by the result of Eggleton, Erd{\H{o}}s, and Skilton above, $\chi(G) \leq 4$ if $G$ is a prime distance graph.  It follows that $K_n$ is not a prime distance graph if $n \geq 5$.  We prove that $K_n$ is also not a 2-odd graph for $n \geq 5$.
\bigskip

\begin{proposition}The graph $K_5$ is not 2-odd.\end{proposition}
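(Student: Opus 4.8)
The plan is to argue by contradiction using the parity structure that a 2-odd labeling imposes. Suppose $L$ is a 2-odd labeling of $K_5$. As in the proof of Lemma~\ref{red-cycle}, adjacent vertices joined by a red edge (difference $2$) receive labels of the same parity, while adjacent vertices joined by a blue edge (odd difference) receive labels of opposite parity. I would therefore partition the five vertices into the set $E$ of vertices with even labels and the set $O$ of vertices with odd labels.

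The key observation is that any two vertices lying in the same parity class are adjacent in $K_5$ and have labels of the same parity, so their difference is even; since $L$ is 2-odd, that difference must be exactly $2$. Thus the labels within $E$ pairwise differ by $2$, and likewise within $O$. But no three distinct integers can pairwise differ by $2$: once $a$ and $b$ differ by $2$, the only integers differing from both by $2$ are already among $a,b$. Hence $|E| \le 2$ and $|O| \le 2$, giving $|E| + |O| \le 4 < 5$, a contradiction. This same count shows $K_n$ is not 2-odd for every $n \ge 5$, which is the stronger statement advertised just above the proposition.

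As an alternative I would note a purely combinatorial double count from Corollary~\ref{odd-cycle}: every $3$-cycle of $K_5$ contains exactly one red edge, $K_5$ has $\binom{5}{3}=10$ triangles, and each edge lies in exactly $3$ of them. Counting the (triangle, red edge) incidences two ways forces $3r = 10$, where $r$ is the number of red edges, which is impossible. There is no real obstacle here; the only point to pin down carefully is the elementary fact that at most two integers can pairwise differ by $2$ (equivalently, that each same-parity color class has size at most $2$). I would present the parity argument as the main proof, since it immediately yields the result for all $K_n$ with $n \ge 5$.
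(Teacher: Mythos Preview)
Your proposal is correct, and both arguments you give are genuinely different from the paper's. The paper argues via the induced edge-coloring: it fixes a red edge $v_1v_2$, then repeatedly applies Corollary~\ref{odd-cycle} (every triangle has exactly one red edge) to force the colors of the remaining edges one by one, eventually producing an all-red triangle on $v_3,v_4,v_5$, which contradicts Lemma~\ref{red-cycle}. Your parity argument bypasses the edge-coloring framework entirely and works directly with the labels; it is shorter, and, as you note, it immediately gives the result for every $K_n$ with $n\ge 5$, whereas the paper deduces that only afterward from the hereditary property of 2-odd graphs. Your double-counting alternative ($3r=10$) is also valid and quite slick, though it is specific to $K_5$; for general $K_n$ it yields $(n-2)r=\binom{n}{3}$, i.e.\ $r=\binom{n}{2}/3$, which is not always non-integral, so the parity argument is the right one to lead with. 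Either of your proofs would be a clean replacement for the paper's.
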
 \label{K5}

 \begin{proof}We label the vertices of $K_5$ as $v_1$, $v_2$, $v_3$, $v_4$, and $v_5$ in cyclic order.  By way of contradiction, assume that $K_5$ has a 2-odd coloring.  By Lemma \ref{red-cycle}, the outside cycle of $K_5$ must have at least one red edge; without loss of generality, we may assume that $v_1 v_2$ is red.  Then by Corollary~\ref{odd-cycle}, $v_1 v_5$ and $v_2 v_5$ must be blue.  Similarly, $v_1 v_4$ and $v_2 v_4$ must be blue, as must $v_1 v_3$ and $v_2 v_3$.  With both $v_1 v_3$ and $v_1 v_4$ blue, we must have $v_3 v_4$ red again by Corollary~\ref{odd-cycle}.  But now, because of 3-cycles on $v_2$, $v_3$, $v_5$ and $v_3$, $v_4$, $v_5$, we must have $v_3 v_5$ and $v_4 v_5$ red, which creates a red 3-cycle on $v_3$, $v_4$, and $v_5$.
\end{proof}

Since every subgraph of a 2-odd graph is also 2-odd, it follows that $K_n$ is not 2-odd for $n \geq 5$.

\section{A Characterization of 2-odd Graphs}

The following theorem characterizes 2-odd graphs in terms of edge colorings.  In an edge-colored graph, the \textit{\textbf{red-degree}} (respectively, \textit{\textbf{blue-degree}}) of a vertex $v$ is the number of red edges (blue edges) incident with $v$.
\bigskip

\begin{theorem} \label{2-odd-characterization} A graph $G$ is 2-odd if and only if it admits an edge-coloring satisfying the following two conditions:
\begin{enumerate} \item No vertex of $G$ has red-degree greater than 2. \label{red-degree}
    \item Every cycle in $G$ contains a positive even number of blue edges. \label{blue-cycle}
\end{enumerate}
\end{theorem}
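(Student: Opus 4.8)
The plan is to prove both implications, with the forward (necessity) direction following almost immediately from the lemmas already in hand, and the converse (sufficiency) direction carrying the real content via an explicit construction of a $2$-odd labeling from the coloring.

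For the forward direction I would start with a $2$-odd labeling $L$ of $G$ and color each edge $uv$ red when $L(uv)=2$ and blue when $L(uv)$ is odd. Condition~\ref{blue-cycle} is then exactly the conclusion of Lemma~\ref{red-cycle}. For condition~\ref{red-degree}, note that if $uv$ is red then $L(v)=L(u)\pm 2$; since $L$ is injective, at most one neighbor of $u$ can carry the label $L(u)+2$ and at most one the label $L(u)-2$, so $u$ has red-degree at most $2$.

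The substance lies in the converse, so assume $G$ admits an edge-coloring satisfying conditions~\ref{red-degree} and~\ref{blue-cycle}. I would first analyze the red subgraph $R$: by condition~\ref{red-degree} every vertex of $R$ has degree at most $2$, and by condition~\ref{blue-cycle} there is no all-red cycle (it would contain zero blue edges, and zero is not positive), so $R$ is a disjoint union of paths, with isolated vertices viewed as trivial paths. I would also record that no blue edge joins two vertices of the same red path, since such an edge together with the red subpath between its endpoints would form a cycle with exactly one blue edge, contradicting condition~\ref{blue-cycle}. The next and central step is to assign a consistent parity to each red path. I form the auxiliary graph $H$ whose vertices are the red paths of $G$, with one edge of $H$ for each blue edge of $G$ (which, by the previous observation, always joins two distinct paths), and I claim $H$ is bipartite. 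If $H$ had an odd cycle through paths $R_1,\dots,R_{2\ell+1}$, then splicing the corresponding blue edges together with the red subpaths of each $R_i$ joining consecutive attachment points yields a closed walk in $G$ traversing an odd number of blue edges; reducing this walk modulo $2$ writes its edge set as a symmetric difference of cycles of $G$, and since each such cycle has an even number of blue edges by condition~\ref{blue-cycle}, iterating the parity argument of Lemma~\ref{sym-diff} forces the walk to contain an even number of blue edges, a contradiction. Hence $H$ is bipartite, and a proper $2$-coloring of $H$ assigns to each red path a parity bit in $\{0,1\}$ so that every blue edge joins paths of opposite parity.

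With the parities fixed, I would build the labeling directly. Enumerate the red paths $P_1,P_2,\dots$ and process them in turn; within a path $u_0u_1\cdots u_m$ set $L(u_i)=c_j+2i$, choosing the base $c_j$ to have the parity assigned to $P_j$ and to lie in a range of integers disjoint from every label used so far (for instance, larger than all previous labels with the correct parity). Then each red edge has difference exactly $2$; each blue edge joins vertices of opposite parity, so its difference is odd; and all labels are distinct by the spacing of the bases. This produces a $2$-odd labeling of $G$. I expect the main obstacle to be precisely the bipartiteness of $H$, i.e. showing that the required parities can be assigned consistently: this is the only place condition~\ref{blue-cycle} is used in its full ``even'' strength (the ``positive'' half having already served to rule out red cycles), and the symmetric-difference machinery of Lemma~\ref{sym-diff} is exactly what converts the per-cycle parity hypothesis into the global consistency needed for the construction.
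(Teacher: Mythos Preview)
Your proof is correct and follows essentially the same route as the paper. The paper phrases the key step as contracting the red subgraph to obtain a minor $G/H$ and asserting it is bipartite; your auxiliary graph whose vertices are the red paths is precisely this minor, and your closed-walk/symmetric-difference argument spells out in detail the step the paper compresses into the single line ``$G/H$ is bipartite since $G/H$ has no odd cycles by Condition~\ref{blue-cycle}.''
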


\begin{proof} If $G$ is 2-odd, then it admits a labeling with distinct integers such that the difference between labels on adjacent vertices is either odd or exactly 2.  This induces a coloring on the edges of $G$: when the difference between such labels is exactly 2, color the corresponding edge
red; when the difference is odd, color the edge blue.  No vertex can be incident with three or more red edges since such a vertex
would have a label exactly 2 different from three distinct integers.  Also, since the labeling is color-satisfying by construction, Lemma
\ref{red-cycle} implies that Condition~\ref{blue-cycle} is satisfied.

Conversely, suppose that $G$ admits an edge-coloring satisfying Conditions~\ref{red-degree} and~\ref{blue-cycle}.  Consider the subgraph $H$ of $G$ consisting solely of red edges.  By Conditions~\ref{red-degree} and~\ref{blue-cycle}, $H$ is a union of paths.  Consider the minor $G/H$ obtained by contracting all of the red edges.  We see that $G/H$ is bipartite since $G/H$ has no odd cycles by Condition~\ref{blue-cycle}.  Label the partite sets of $G/H$ $A$ and $B$.  Partition the vertices of $G$ into sets $A'$ and $B'$ depending on whether they contract to a vertex in $A$ or $B$ in $G/H$.  Note that since every red path in $G$ contracts to a vertex in $G/H$, each red path in $G$ is completely contained in either $A'$ or $B'$.  Label the vertices of each of these paths with consecutive odd integers if the path lies in $A'$ and with consecutive even integers if the path lies in $B'$, using only previously unused integers in each case.  Any remaining vertices of $A'$ again can be labeled with any unused odd integers and remaining vertices of $B'$ can be labeled with any unused even integers.  This labeling is a 2-odd labeling of $G$ since every edge between vertices of the same parity is labeled with $2$. \end{proof}

\section{Circulant Graphs}

For a positive integer $n \geq 3$ and subset $S \subseteq \{1,2, \ldots, n\}$, the \textit{\textbf{circulant graph}} $\Circ(n, S)$ is the graph with
vertex set $\{v_1,v_2, \ldots, v_n\}$ and an edge between vertices $v_i$ and $v_j$ if and only if $|i-j| \m n \in S$ \cite{Gross06}.  Equivalently, $\Circ(n,S)$ is the Cayley graph of the group $\mathbb{Z}_n$ with generating set $S$.  In this section we focus on the circulant graphs $\Circ(n, \{1,k\})$ for $1 \leq k \leq n-1$, which, for simplicity, we write as $\Circ(n,k)$.  Since $\Circ(n,k) \cong \Circ(n,n-k)$, we choose $k \leq n/2$.

\begin{figure}[ht!]
\begin{center}\includegraphics[width=2in]{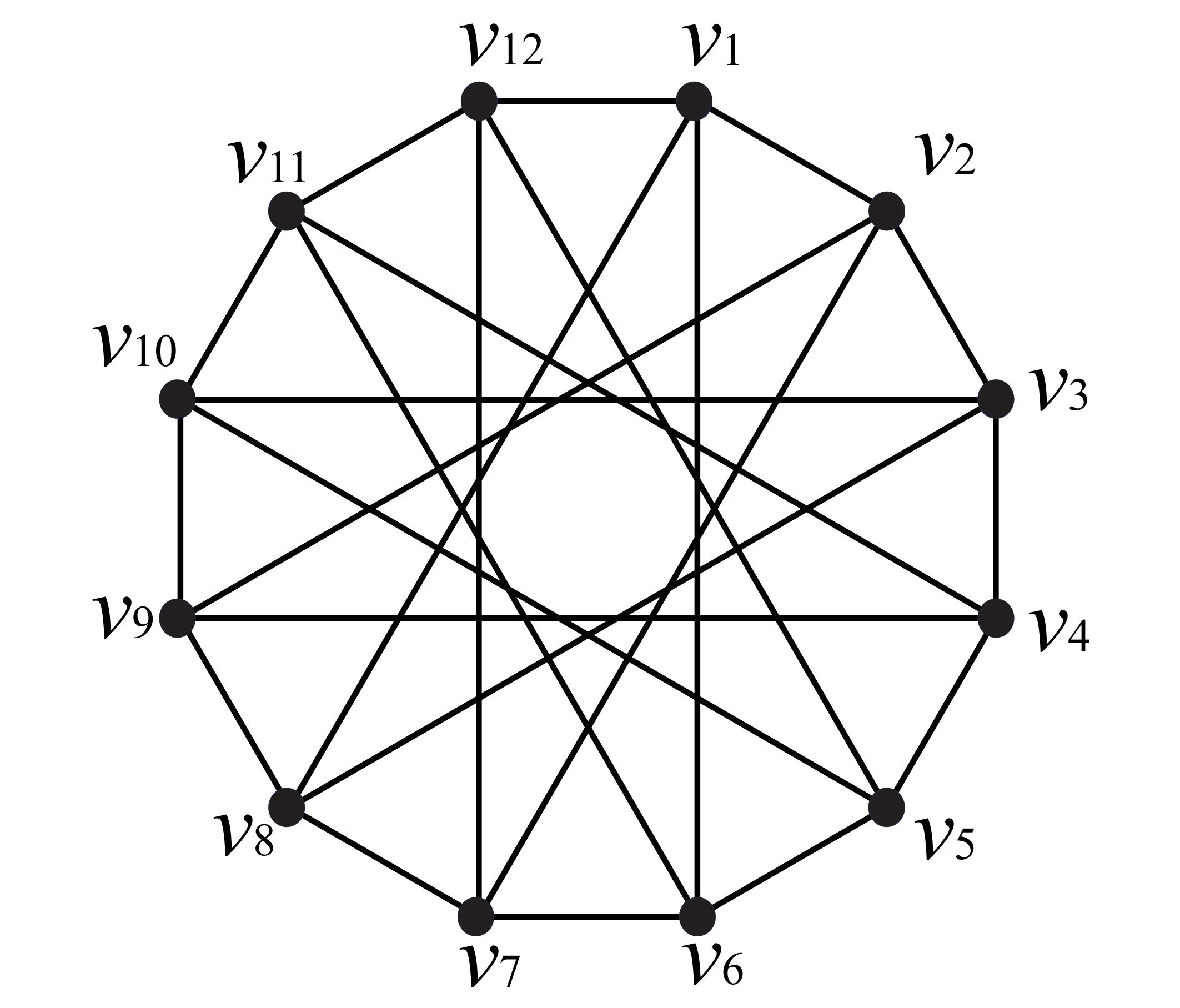}
\end{center}
\caption{The graph $\Circ(12,5)$.}\label{circ12-5}
\end{figure}

In these graphs we call the edges for which $|i-j| \m n =1$ the \textit{\textbf{outside edges}} and the edges for which $|i-j| \m n =k$ the \textit{\textbf{inside edges}}, since our drawings of $\Circ(n,k)$ will have the outside edges around the outside of a circle, with inside edges as chords.  Each vertex in $\Circ(n,k)$ is incident to exactly 2 outside edges and 2 inside edges as long as $k>1$.  If $k=1$ we say that $\Circ(n,k)$ has no inside edges.  In addition, we define a \textit{\textbf{generating cycle}} of $\Circ(n,k)$ to be either a $(k+1)$-cycle with exactly one inside edge, or the outside $n$-cycle.  We will refer to the generating $(k+1)$-cycles as $g_1, g_2, \ldots, g_n,$ where $g_i$ is the generating cycle with inside edge $v_i v_{i+k}$.

\subsection{2-odd Circulant Graphs}

\begin{lemma} \label{generate} Every cycle in $\Circ(n,k)$ is a generating cycle or a symmetric difference of generating cycles. \end{lemma}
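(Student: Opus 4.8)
The plan is to exploit the structure of the symmetric-difference calculus together with the fact that each inside edge of $\Circ(n,k)$ lies in a unique generating $(k+1)$-cycle. First I would fix notation: write $O$ for the outside $n$-cycle, and recall that $g_i$ is the generating $(k+1)$-cycle whose only inside edge is $v_i v_{i+k}$. The essential observation is that every inside edge of $\Circ(n,k)$ is the inside edge of exactly one generating $(k+1)$-cycle, whereas the outside edges are precisely the edges of $O$.

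Next, given an arbitrary cycle $C$, I would let $I$ denote its set of inside edges and form the subgraph
$$ C' = C \ \triangle \left( \mathop{\triangle}_{e \in I} g_e \right), $$
where $g_e$ is the generating $(k+1)$-cycle having $e$ as its inside edge. Since a symmetric difference of cycles is a union of edge-disjoint cycles (as noted just before Lemma~\ref{sym-diff}), and since symmetric difference preserves the parity of every vertex degree, $C'$ has all vertices of even degree. The crucial bookkeeping step is that $C'$ contains no inside edges whatsoever: each $g_e$ contributes exactly the single inside edge $e$, these are distinct as $e$ ranges over $I$, and each such $e$ cancels against the matching inside edge of $C$; because the $g_e$ introduce no other inside edges, all inside edges disappear.

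I would then observe that $C'$ is supported entirely on outside edges, which form the single $n$-cycle $O$. An even subgraph of a graph that is itself just an $n$-cycle is either empty or the whole cycle, so $C' = \emptyset$ or $C' = O$. Rearranging the displayed identity yields
$$ C = \left( \mathop{\triangle}_{e \in I} g_e \right) \triangle C', $$
which exhibits $C$ as a symmetric difference of generating cycles, namely the various $g_e$ together with $O$ in the case $C' = O$. If this expression collapses to a single generating cycle, then $C$ is itself a generating cycle; otherwise it is a genuine symmetric difference of generating cycles, which is exactly the claim.

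The step I expect to require the most care is the assertion that an even subgraph supported only on the outside edges must be $\emptyset$ or $O$, together with the verification that the inside edges cancel exactly once. The latter needs a short remark in the boundary case $2k = n$, where a given inside edge belongs to two generating $(k+1)$-cycles; there one simply selects either one as $g_e$, and the argument is unaffected. Everything else is routine manipulation of symmetric differences, and the $k=1$ case (no inside edges, so the only cycle is the generating cycle $O$) is immediate.
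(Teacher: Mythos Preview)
Your proof is correct and rests on the same key idea as the paper's: cancel the inside edges of $C$ against generating $(k+1)$-cycles until only outside edges remain, then observe that what is left must be $\emptyset$ or the full outside cycle $O$. The only difference is packaging: the paper peels off one inside edge at a time and inducts on the number of inside edges (writing $C\triangle D$ as a union of cycles each with fewer inside edges), whereas you cancel all inside edges simultaneously via $\triangle_{e\in I} g_e$ and then invoke the fact that an even subgraph of a single cycle is $\emptyset$ or the whole cycle. Your direct argument is arguably cleaner and also makes the boundary case $2k=n$ explicit, which the paper's proof handles implicitly.
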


\begin{proof} Let $C$ be a cycle in $\Circ(n,k)$.  The proof is by induction on the number of inside edges of $C$.  If there are none, then $C$ is the
outside cycle and the lemma is satisfied.  Now assume that $C$ has $c$ inside edges and that any cycle with fewer than $c$ inside edges satisfies the lemma.  If $C$ is a generating cycle, we are done.  Otherwise, let $D$ be a generating cycle whose inside edge also belongs to $C$.  Then $C\triangle D$ is an edge-disjoint union of cycles $C_1$, $C_2$, $\ldots$, $C_k$, each of which has fewer than $c$ inside edges and therefore satisfies the lemma.  Since $C_1 \triangle C_2 \triangle \cdots \triangle C_k \triangle D=(C\triangle D)\triangle D=C$, we see that $C$ also satisfies the lemma.

\end{proof}

\begin{theorem} \label{2-odd-circ-char} A circulant graph $\Circ(n,k)$ is 2-odd if and only if it admits an edge-coloring satisfying the following two conditions:
\begin{enumerate} \item No vertex of $\Circ(n,k)$ has red-degree greater than 2. \label{red-degree-circ}
    \item Every generating cycle of $\Circ(n,k)$ contains a positive even number of blue edges. \label{blue-cycle-circ}
\end{enumerate}
\end{theorem}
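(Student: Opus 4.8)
The plan is to derive this result from the general characterization in Theorem~\ref{2-odd-characterization}, by showing that for circulant graphs it suffices to check the blue-edge condition on the generating cycles alone. The forward direction is immediate: if $\Circ(n,k)$ is 2-odd, then by Theorem~\ref{2-odd-characterization} it admits an edge-coloring in which no vertex has red-degree greater than $2$ and \emph{every} cycle contains a positive even number of blue edges. Since generating cycles are a special kind of cycle, Conditions~\ref{red-degree-circ} and~\ref{blue-cycle-circ} hold at once.

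For the converse I would begin with an edge-coloring satisfying Conditions~\ref{red-degree-circ} and~\ref{blue-cycle-circ} and show that it already satisfies the hypotheses of Theorem~\ref{2-odd-characterization}, from which 2-oddness follows. Condition~\ref{red-degree} there is literally Condition~\ref{red-degree-circ}, so the real task is to upgrade Condition~\ref{blue-cycle-circ} from generating cycles to \emph{all} cycles. The tool for this is Lemma~\ref{generate}: an arbitrary cycle $C$ of $\Circ(n,k)$ is either a generating cycle---handled directly by Condition~\ref{blue-cycle-circ}---or a symmetric difference $g_{i_1}\triangle\cdots\triangle g_{i_m}$ of generating cycles. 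In the latter case I would feed this decomposition into Lemma~\ref{sym-diff}, applied repeatedly along the inductive structure of Lemma~\ref{generate}, to transfer the ``positive even number of blue edges'' property from the generating cycles out to $C$. Once every cycle is known to carry a positive even number of blue edges, Theorem~\ref{2-odd-characterization} closes the argument.

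The main obstacle is the bookkeeping in this transfer, which is subtler than a naive chaining of Lemma~\ref{sym-diff} would suggest. Each intermediate symmetric difference produced by Lemma~\ref{generate} is in general a \emph{union} of edge-disjoint cycles rather than a single cycle, so one must track the total blue-edge count through such unions and organize the reasoning as an induction on the number of inside edges, parallel to the proof of Lemma~\ref{generate}. The parity (evenness) of the blue-edge count propagates cleanly, because counting blue edges modulo $2$ is a $\mathbb{Z}_2$-linear functional on the cycle space, which the generating cycles span; the hard part is \emph{positivity}. A blue-edge count of zero---an all-red cycle---is not excluded by parity alone, since two generating cycles can share all of their blue edges so that those edges cancel in the symmetric difference. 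Pinning down exactly how Conditions~\ref{red-degree-circ} and~\ref{blue-cycle-circ} prevent an all-red cycle (equivalently, force the red subgraph to be a disjoint union of paths), or whether one must instead adjust the given coloring to remove such cycles while preserving both conditions, is the step I expect to demand the most care.
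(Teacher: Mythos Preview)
Your plan is exactly the paper's: its entire proof of this theorem is the single sentence ``Apply Lemmas~\ref{sym-diff} and~\ref{generate} and Theorem~\ref{2-odd-characterization}.'' So at the level of strategy you are fully aligned with the authors.

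Your worry about positivity, however, is not mere bookkeeping---it points to a genuine gap, and one the paper itself does not close. The proof of Lemma~\ref{sym-diff} establishes only that the number of blue edges in $C_1\triangle C_2$ is \emph{even}; positivity is never argued, and in fact it can fail. Concretely, take $\Circ(6,2)$ and color every outside edge blue and every inside edge red. Each vertex then has red-degree exactly~$2$, each generating $3$-cycle has two blue outside edges and one red inside edge, and the outside $6$-cycle is entirely blue, so Conditions~\ref{red-degree-circ} and~\ref{blue-cycle-circ} both hold. Yet the inside edges form the two all-red triangles $v_1v_3v_5$ and $v_2v_4v_6$, so this coloring violates Condition~\ref{blue-cycle} of Theorem~\ref{2-odd-characterization} and admits no color-satisfying $2$-odd labeling. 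The converse direction therefore cannot be proved by showing that \emph{the given} coloring already meets the hypotheses of Theorem~\ref{2-odd-characterization}; one must either modify the coloring to break red cycles while preserving both conditions, or argue $2$-oddness by some other route. You were right to flag this as the step demanding the most care---the paper's one-line proof simply takes it for granted.
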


\begin{proof} Apply Lemmas \ref{sym-diff} and \ref{generate} and Theorem \ref{2-odd-characterization}.
\end{proof}

Theorem \ref{2-odd-circ-char} gives us a way to quickly verify whether a given edge-coloring of $\Circ(n,k)$ shows that $\Circ(n,k)$ is 2-odd.

The following theorem gives a characterization of 2-odd circulant graphs.
\bigskip

\begin{theorem} The circulant graph $\Circ(n,k)$ is 2-odd if and only if $(n,k)\ne (5,2).$
\end{theorem}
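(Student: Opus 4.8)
The plan is to treat the excluded pair separately and then reduce the positive direction to a coloring problem via Theorem~\ref{2-odd-circ-char}. First I would observe that $\Circ(5,2)$ joins each vertex to all of $v_{i\pm 1},v_{i\pm 2}\pmod 5$, hence equals $K_5$, which is not $2$-odd by the Proposition above; this is the ``only if'' half. For the converse I fix $(n,k)\neq(5,2)$ with $1\le k\le n/2$ and aim to exhibit an edge-coloring with red-degree at most $2$ in which every generating cycle carries a positive even number of blue edges. To be sure this really produces a labeling, I would additionally arrange that the red edges form a union of paths; then no cycle is monochromatically red, so combining Lemmas~\ref{generate} and~\ref{sym-diff} (evenness propagates through symmetric differences of generating cycles) with Theorem~\ref{2-odd-characterization} yields a genuine $2$-odd labeling.

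Write $e_i=v_iv_{i+1}$ for the outside edges and $f_i=v_iv_{i+k}$ for the inside edges, and let $x_i,y_i\in\{0,1\}$ indicate whether $e_i,f_i$ are red. Since the generating cycles have lengths $n$ (the outside cycle) and $k+1$ (each $g_i$), the even-blue requirement becomes $\sum_i x_i\equiv n$ and $x_i+\dots+x_{i+k-1}+y_i\equiv k+1\pmod 2$ for all $i$, the construction splitting along the parities of $n$ and $k$. When $n$ is even and $k$ is odd every generating cycle has even length, so coloring all edges blue works immediately. When $k=1$ the graph is just $C_n$, solved by a single red edge when $n$ is odd. When $n$ and $k$ are both odd with $k\ge 3$, only the outside cycle has the wrong parity: I would redden one outside edge $e_{j_0}$ and the block $f_{j_0-k+1},\dots,f_{j_0}$ of inside edges, which restores even blue in each of the $k$ cycles $g_i$ meeting $e_{j_0}$ while leaving the red subgraph a union of paths.

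The delicate case is $k$ even, where each $g_i$ has odd length and so needs an odd number of red edges, yet the outside cycle needs an even number. Reddening all inside edges gives exactly one red per $g_i$ but creates the $\gcd(n,k)$ monochromatic inside cycles of length $n/\gcd(n,k)$, so it is illegal once these lengths exceed $2$. Instead I would look for a periodic red pattern on the outside edges making every window of $k$ consecutive outside edges odd; the recurrence $x_{i+k}-x_i\equiv (x_i+\dots+x_{i+k-1})_{i+1}-(\cdot)_i$ forces period $\gcd(n,k)$, and since consecutive red outside edges then lie $\gcd(n,k)\ge 2$ apart, the red subgraph stays a matching while each $g_i$ becomes odd. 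This closes up around the cycle in the favorable subcases (for instance when the $2$-adic valuation of $k$ is below that of $n$, or when $k\mid n$ with $n/k$ even).

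The main obstacle, where I expect the bulk of the work, is the residual family of $k$-even graphs in which no periodic pattern can make every window odd and the total even at once; the smallest such instance is $\Circ(6,2)$. Here the coupling is unavoidable, since each outside edge lies in $k$ distinct generating cycles, so there is no purely local correction. For these I would instead solve the parity system $x_i+\dots+x_{i+k-1}+y_i\equiv 1$ together with $\sum_i x_i\equiv 0\pmod 2$ directly and extract a \emph{sparse} solution whose red edges break into short outside/inside paths; my explicit coloring of $\Circ(6,2)$ with red edges $v_1v_2,\ v_3v_4,\ v_4v_6,\ v_5v_1$ (two disjoint paths, each $g_i$ containing exactly one red edge) is the prototype. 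Establishing that such a path-forming sparse solution exists for every remaining $(n,k)$, and separately checking the boundary cases $k=n/2$ (inside edges coincide into a matching) and $n=2k-1$ (where naive compensation threatens a short red cycle), is the heart of the argument.
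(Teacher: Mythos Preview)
Your treatment of the excluded pair and of the parity-easy cases ($k=1$; $n$ even with $k$ odd; $n$ and $k$ both odd) is correct and matches the paper's Cases~1, 4, 5, 7 essentially verbatim. The gap is exactly where you locate it yourself: for $k$ even you offer a heuristic and one worked example, and then explicitly defer the existence of a ``path-forming sparse solution'' for the residual family as ``the heart of the argument.'' That is not a proof. Your periodicity analysis is sound as far as it goes---requiring every length-$k$ outside window to be odd forces $x_{i+k}\equiv x_i\pmod 2$, hence period $\gcd(n,k)$---but it leads you to abandon the construction precisely when the $2$-adic valuations of $n$ and $k$ fail to cooperate, and you give no replacement.

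The paper's device for $k$ even sidesteps periodicity altogether. For $1<k<n/2$ it lays down a \emph{non-wrapping run} of red outside edges $v_{jk}v_{jk+1}$ for $j=0,1,\dots,m$, choosing $m$ of the parity (odd when $n$ is even, even when $n$ is odd and $n>5$) that makes the count $m+1$ congruent to $n\pmod 2$; most generating $(k{+}1)$-cycles then contain exactly one red outside edge, and the finitely many exceptions---those containing both the first and last red outside edges, or none at all---are repaired by coloring their inside edge red. A direct vertex-by-vertex check shows no red-degree exceeds~$2$. The boundary subcase $k=n/2$ even is handled separately by coloring all inside edges red, which is legal because those edges now form a perfect matching rather than cycles. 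In fact your $\Circ(6,2)$ example is, after a rotation by one vertex, exactly the paper's construction with $m=1$, so you had already found the right object; what was missing was dropping the global-periodicity demand in favor of a single $k$-spaced block plus local inside-edge corrections.
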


\begin{proof}

\noindent \textbf{Case 1. $k=1$.} In this case $\Circ(n,k)=C_n$, which by Theorem~\ref{cycle-theorem} is prime distance, hence 2-odd, for all $n$.
\bigskip

\noindent \textbf{Case 2. $n$ is even, $k$ is even, and $1< k < n/2$.} \label{even-even}  We construct a 2-odd coloring of $\Circ(n,k)$.  Color red the outside edges $v_n v_1$, $v_k v_{k+1}$, $v_{2k} v_{2k+1}$, $\ldots$, $v_{mk} v_{mk+1}$, where $m$ is the greatest odd integer such that $mk < n$.  That is, color red every $k$th outside edge, stopping after the maximum even number of outside edges are red without passing the first red edge.  Also color red the inside edge of any generating cycle that contains both $v_n v_1$ and $v_{mk} v_{mk+1}$ (the first and last outside edges that are red) or none of the red outside edges. Color the remaining edges blue.

We show that this coloring satisfies both conditions of Theorem~\ref{2-odd-circ-char}.  We first check Condition~\ref{blue-cycle-circ}.  Note that our construction guarantees an even number of red edges on the outside cycle, which is an even cycle since $n$ is even.  The remaining generating cycles are all odd, so to satisfy Condition~\ref{blue-cycle-circ} they must have an odd number of red edges.  Since none of these cycles contain more than 2 outside red edges, by construction they have an inside red edge if and only if they have an even number of outside red edges, so Condition~\ref{blue-cycle-circ} is satisfied.

Finally, we check Condition~\ref{red-degree-circ}. Since every vertex in $\Circ(n,k)$ is incident to 2 outside edges and 2 inside edges, and there are no 2 consecutive red outside edges in this coloring, any vertex with red-degree 3 or greater must be incident to exactly 1 outside red edge and 2 inside red edges.  However, the only red inside edges are those that belong to generating cycles containing both $v_n v_1$ and $v_{mk} v_{mk+1}$ or containing no outside red edges.  The latter case cannot create a red-degree-3 vertex.  For the former case, we observe that the only potential red-degree-three vertices are $v_n$, $v_1$, $v_{mk}$, and $v_{mk+1}.$  Of these, $v_n$ and $v_{mk+1}$ do not belong to generating cycles that contain both $v_n v_1$ and $v_{mk}v_{mk+1}.$  Since neither $v_1v_{k+1}$ nor $v_{(m-1)k} v_{mk}$ (the other inside edges on $v_1$ and $v_{mk}$) belongs to such a generating cycle either, none of the four candidates can have red-degree three.
\bigskip

\noindent \textbf{Case 3. $n$ is even, $k$ is even, and $k=n/2$.}  Coloring exactly the inside edges red gives each generating cycle (except the outside cycle) one red edge and $k=n/2$ blue edges, and the outside cycle $n$ blue edges.  Thus, this edge-coloring satisfies Theorem \ref{2-odd-circ-char}.
\bigskip

\noindent \textbf{Case 4. $n$ is even and $k$ is odd.}  Since every generating cycle has length $k+1$ or length $n$, every cycle of $G$ is even by Lemma~\ref{generate}, so coloring every edge of $G$ blue yields a 2-odd coloring of $G$.
\bigskip

\noindent \textbf{Case 5. $n$ is odd and $k$ is odd.} Color the outside edge $v_1 v_2$ red, and color the $k$ inside edges $v_{i-k} v_i$ red for $2 \leq i \leq k+1$.  This gives one red edge on the outside cycle and one red edge on each of the $k$ generating cycles that include it.  Note that the outside cycle is odd and the other generating cycles are even.  This coloring gives $1$ red edge on the outside cycle and either $0$ or $2$ red edges on each other generating cycle, satisfying the conditions of the theorem.
\bigskip

\noindent \textbf{Case 6. $n$ is odd, $k$ is even, and  $n>5$.} \label{odd-even} Apply the technique used in the proof of Case 2, but take $m$ as the greatest {\it even} integer such that $mk<n.$  The argument then proceeds as above.  Note that the argument fails for $K_5=\Circ(5,2)$ since the procedure would create a red 3-cycle.
\bigskip

\noindent \textbf{Case 7. $(n,k)=(5,2)$.}  In this case $\Circ(n,k)=K_5$, which by Proposition~\ref{K5} is not a 2-odd graph.

\end{proof}

We will use these 2-odd colorings to aid us in determining which circulant graphs are prime-distance graphs.

\subsection{Prime Distance Circulant Graphs}

The following theorem summarizes the results of this section.  Recall that without loss of generality, we choose $k \leq n/2$.
\bigskip

\begin{theorem}
  The circulant graph $\Circ(n,k)$ is not a prime distance graph if

  \begin{enumerate}
  \item $n$ is odd and $k=2$

  \item $n$ is odd and $k=(n-1)/2$.
  \end{enumerate}

  The circulant graph $\Circ(n,k)$ is a prime distance graph if

  \begin{enumerate}

    \item $n>7$ and $k=3$

    \item $n$ is even, $k=n/2$, and $k$ is even

    \item $n$ is even and $k$ is odd.

  \end{enumerate}
\end{theorem}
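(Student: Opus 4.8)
The plan is to treat the two non-examples and the three examples separately, exploiting isomorphisms to cut down the work. First I would observe that for odd $n$ the two excluded families coincide up to isomorphism: since $\gcd(2,n)=1$, multiplication by $2$ is an automorphism of $\mathbb{Z}_n$ carrying the connection set $\{\pm 1,\pm (n-1)/2\}$ to $\{\pm 2,\pm (n-1)\}=\{\pm 1,\pm 2\}$ (using $n-1\equiv -1$), so $\Circ(n,(n-1)/2)\cong\Circ(n,2)$. Because being a prime distance graph is an isomorphism invariant, case (2) reduces to case (1), and it suffices to show $\Circ(n,2)$ is not a prime distance graph for odd $n$. (As a sanity check, this isomorphism gives $\Circ(7,3)\cong\Circ(7,2)$ and $\Circ(5,3)\cong K_5$, which is exactly why the positive $k=3$ result must exclude $n\le 7$.) For non-existence I would use Corollary~\ref{odd-cycle}: every three consecutive vertices $v_i,v_{i+1},v_{i+2}$ form a triangle, so in any prime distance labeling each such triangle has exactly one edge of label $2$ and two edges whose labels are odd primes; these two odd primes differ by the even label, hence form a pair of twin primes. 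Consequently the red (label-$2$) edges form vertex-disjoint paths (Lemma~\ref{red-cycle} forbids red cycles), and along any such path the labels are forced to be consecutive even integers $a,a+2,a+4,\dots$, since each new vertex must take a previously unused value $\pm 2$ from its neighbor.

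The contradiction then rests on two elementary facts about these forced even runs. First, the endpoints of a red path on $t\ge 3$ vertices differ by $2(t-1)\ge 4$, an even composite, so no blue edge may join them. Second, no integer can differ by a prime from all four of $\{a,a+2,a+4,a+6\}$: the four differences are consecutive odd integers $s,s-2,s-4,s-6$, whose residues modulo $3$ exhaust $\mathbb{Z}/3$, forcing the multiple of $3$ among them to be $\pm 3$ and hence forcing $\pm 1$ (not prime) to appear. The heart of the argument, and the step I expect to be the main obstacle, is to show that \emph{every} admissible red-coloring of $\Circ(n,2)$ necessarily exhibits one of these two forbidden patterns. Writing $x_i,y_i\in\{0,1\}$ for red outer and inside edges, each triangle gives $x_i+x_{i+1}+y_i=1$; summing yields $2r_o+r_i=n$, so since $n$ is odd, $r_i$ is odd and at least one inside edge is red. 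I would analyze the red components forced around the cycle and argue that the odd parity prevents them from closing up without creating either a red path whose two ends are adjacent or a vertex adjacent to a full four-vertex red path. The winding-type invariant suggested by the paper's $\Wind$ operator is, I expect, the clean way to package this parity obstruction uniformly in $n$.

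For the three positive cases I would give explicit constructions guided by the $2$-odd colorings of the preceding classification theorem. Case (3), $n$ even and $k$ odd, is immediate: every generating cycle has even length ($k+1$ or $n$), so by Lemma~\ref{generate} every cycle of $\Circ(n,k)$ is even and the graph is bipartite, hence a prime distance graph by Theorem~\ref{bipartite}. For case (2), $n$ even with $k=n/2$ even, the inside edges form the antipodal perfect matching, which the target coloring makes red. Writing $m=n/2$, I would set $L(v_{m+i})=L(v_i)+2$ so every antipodal edge has label $2$, and choose $L(v_0),\dots,L(v_{m-1})$ to be a prime distance labeling of a path with opposite-parity endpoints whose difference $g=L(v_0)-L(v_{m-1})$ satisfies that $g+2$ and $g-2$ are both prime (e.g.\ $g=5$, giving $7$ and $3$). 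Then the two seam edges $v_{m-1}v_m$ and $v_{n-1}v_0$ carry labels $|g+2|$ and $|g-2|$, the interior outer edges inherit the prime differences of the path, and only distinctness of the $2m$ labels remains to be arranged, which is routine.

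Finally, for case (1), $k=3$ with $n>7$: when $n$ is even this is already covered by case (3), so the real content is odd $n\ge 9$. Here I would construct a prime distance labeling directly along the vertices so that both the length-one differences $L(v_{i+1})-L(v_i)$ and the length-three differences $L(v_{i+3})-L(v_i)$ are prime. Equivalently, the step sequence $s_i=L(v_{i+1})-L(v_i)$ should consist of signed primes whose every three consecutive terms sum to a prime, with $\sum s_i=0$ to close the cycle and distinct partial sums. Since $n$ is odd, an even number of the $s_i$ must be odd, so at least one step equals $\pm 2$; I would place these difference-$2$ edges as dictated by the Case~5 coloring, realize a long stretch of the remaining steps from a prime arithmetic progression (Green--Tao) so that the length-one and length-three differences are simultaneously prime, and patch the bounded number of seam edges using the Goldbach/Vinogradov technique of Theorem~\ref{cycle-theorem}. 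Verifying that the length-one and length-three conditions can be satisfied at once while closing the odd cycle is the delicate part of this construction.
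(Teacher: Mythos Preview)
Your isomorphism observation for the two non-existence cases is a genuine improvement over the paper: multiplication by $2$ on $\mathbb{Z}_n$ does send $\{\pm 1,\pm(n-1)/2\}$ to $\{\pm 1,\pm 2\}$, so $\Circ(n,(n-1)/2)\cong\Circ(n,2)$, and the paper does not notice this---it proves the two cases separately, via the $CF_k$ graphs for $k=2$ and via bowtie paths and the $RP_m/BP_m$ subgraphs for $k=(n-1)/2$. Collapsing to one case is cleaner.

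However, the non-existence argument you sketch for $\Circ(n,2)$ has a real gap. Your two ``forbidden patterns'' are correct as local obstructions, but neither is forced in the way you need: in $\Circ(n,2)$ a vertex's four neighbours are $v_{j\pm 1},v_{j\pm 2}$, which are never four \emph{consecutive} members of a red path, so the $\{a,a+2,a+4,a+6\}$ mod-$3$ trap does not apply directly; and your triangle equation $x_i+x_{i+1}+y_i=1$ gives $2r_o+r_i=n$, but this alone does not produce the contradiction. There is also no $\Wind$ invariant in the paper---the operator is declared but never used. What the paper actually does is structural: it shows via Lemma~\ref{CF} and the $CF_k$ lemma that between any two consecutive red \emph{outside} edges there must lie an odd number of blue outside edges, which forces the labels on successive red outside edges to alternate parity in pairs, hence an even number of red outside edges, contradicting Corollary~\ref{odd-cycle} on the odd outside cycle. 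You would need either this $CF_k$ mechanism or an equivalent to close the argument.

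For the positive cases: your treatment of $n$ even, $k$ odd (bipartite, Theorem~\ref{bipartite}) matches the paper exactly. For $k=n/2$ even, your antipodal $+2$ shift idea is different from the paper's Vinogradov-based labeling and is plausible, but ``distinctness is routine'' hides a real issue: the path labels cannot all have one parity (else every edge is $2$ and the endpoints differ by $2(m-1)\ne 5$), so the $+2$-shifted copy can collide with the original unless you control the label set more carefully. For $k=3$ with $n$ odd, the paper gives a fully explicit labeling with step pattern $3,3,5,3,3,5,\ldots$ (so most inner edges are $11$), split into three cases mod $3$; your Green--Tao-plus-patching outline is much vaguer, and it is not clear that a single arithmetic progression of primes makes both the length-$1$ and length-$3$ differences prime while closing an odd cycle. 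That case deserves an explicit construction rather than an appeal to heavy machinery.
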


\noindent For the remaining values of $n$ and $k$ we make the following conjecture, which we have verified for $n \leq 14$.
\bigskip

\begin{conjecture} \label{circ-conjecture} The circulant graph $\Circ(n,k)$ is a prime distance graph if and only if none of the following hold:
\begin{enumerate}
\item $n$ is odd and $k=2$
\item $n$ is odd and $k=(n-1)/2$
\item $(n,k)=(6,2)$.
\end{enumerate}
\end{conjecture}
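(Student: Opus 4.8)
The plan is to prove the two directions separately: \emph{necessity} (each of the three listed cases yields a graph that is not a prime distance graph) and \emph{sufficiency} (every remaining $\Circ(n,k)$ is a prime distance graph). Necessity for the first two cases---$n$ odd with $k=2$, and $n$ odd with $k=(n-1)/2$---is already established above, so the only additional necessity check is that $\Circ(6,2)$ is not a prime distance graph. Here I would first observe that $\Circ(6,2)$ is exactly the octahedron $K_{2,2,2}$, with inside triangles $v_1v_3v_5$ and $v_2v_4v_6$. By Corollary~\ref{odd-cycle} each of these $3$-cycles, and indeed each odd cycle arising as a symmetric difference of generating cycles, must carry exactly one edge labeled $2$. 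I would then run a short modular argument in the spirit of the Proposition and figure-eight example of Section~3---fixing a forced red edge as $0$--$2$ and tracking the residues modulo $3$ of the remaining four labels---to show that some blue difference is forced to be $1$ (or another non-prime), a contradiction. Since $(6,2)$ lies inside the verified range $n\le 14$, this finite obstruction can alternatively be confirmed directly.

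For sufficiency, the strategy is to use the $2$-odd colorings constructed case-by-case in the proof that $\Circ(n,k)$ is $2$-odd unless $(n,k)=(5,2)$ as fixed templates, and then to promote each template to an honest prime distance labeling. In such a coloring the red edges (forced to carry label $2$) form a disjoint union of paths, so contracting them yields a bipartite minor; writing $A'$ and $B'$ for the two resulting vertex classes (as in Theorem~\ref{2-odd-characterization}), any prime distance labeling must give the vertices of $A'$ one parity and those of $B'$ the other, with each red path receiving a run of consecutive same-parity integers. The families that remain to be handled are precisely the non-bipartite ones not excluded by the three conditions: $n$ even with $k$ even and $1<k<n/2$, other than $(6,2)$; $n$ odd with $k$ even and $k\ne 2$; and $n$ odd with $k$ odd and $k\ne (n-1)/2$. (The bipartite families $n$ even with $k$ odd are already prime distance by Theorem~\ref{bipartite}, while $k=1$ and $k=n/2$ even are handled above.) For each remaining family I would write down an explicit ``staircase'' assignment $L(v_i)=a_i$ whose outside differences $a_{i+1}-a_i$ and inside differences $a_{i+k}-a_i$ are governed by one or two free integer parameters, thereby reducing the entire primality requirement to the \emph{simultaneous} primality of a bounded collection of linear forms in those parameters, consistent with the red/blue pattern dictated by Theorem~\ref{2-odd-circ-char}.

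The main obstacle is exactly this last reduction. Unlike the cycle and bipartite theorems, which each needed only a single arithmetic resource---one prime arithmetic progression from Green--Tao, or one Goldbach/Vinogradov/Ramar\'{e} decomposition---a general circulant imposes $\Theta(n)$ overlapping primality conditions that must all hold at once, and I do not expect to satisfy them unconditionally or even under a single standard hypothesis. The natural tool is a prime $k$-tuples (Dickson / Hardy--Littlewood) heuristic producing a parameter value at which all the required forms are simultaneously prime, but establishing such simultaneous primality lies beyond current technology. This is precisely why the statement is offered as a conjecture rather than a theorem: the construction above reduces it to a simultaneous-primality problem, the cases $n\le 14$ confirm that a good parameter exists there, yet the uniform existence of such a parameter for \emph{all} admissible $(n,k)$ is the genuinely hard step that remains open.

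In summary, the necessity half is within reach (two cases already proved, the third a finite octahedron check), and the sufficiency half reduces cleanly, via the $2$-odd color templates, to a family of simultaneous-primality statements; closing the gap between ``reduction'' and ``guaranteed solution'' is the single step I expect to resist proof, and it is the reason the characterization is stated conjecturally.
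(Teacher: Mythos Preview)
Your analysis is accurate, but note that there is no proof in the paper to compare against: the statement is explicitly a \emph{conjecture}, and the paper says only that it has been ``verified for $n\le 14$.'' You correctly identify this, and your breakdown matches the paper's own partial progress---necessity for conditions~1 and~2 is proved as separate theorems, condition~3 falls under the $n\le14$ verification, and sufficiency is established only for the bipartite case, for $k=3$, and for $k=n/2$ even, with the paper remarking after the $k=3$ argument that ``a new technique is needed to prove this for general $k$.''

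One small point of divergence: the paper does not attempt your proposed reduction of the general sufficiency direction to a simultaneous-primality (Dickson/Hardy--Littlewood) problem via the $2$-odd templates. Its approach for the cases it does handle is instead to write down explicit periodic labelings (the $3,3,5$ pattern for $k=3$, a Vinogradov-based construction for $k=n/2$), tailored to each fixed $k$. Your reduction is a reasonable heuristic framing, but it is your own contribution, not something the paper carries out or claims; the paper simply leaves the remaining cases open.
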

\bigskip

\begin{lemma} \label{CF}
The edge-colored graph $CF$ shown in Figure~\ref{fan} has exactly one color-satisfying prime distance labeling up to isomorphism, labeling the
vertices $a$, $b$, $c$, and $d$ with $0$, $2$, $4$, and $7$, respectively.
\end{lemma}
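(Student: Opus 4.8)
The plan is to use the two red edges to pin down the labels on $a$, $b$, $c$ completely (up to the symmetries of the colored graph), and then to show that the three blue edges meeting $d$ leave only one admissible label for $d$. I read from Figure~\ref{fan} that $CF$ is the fan consisting of the red path $a$--$b$--$c$ together with the three blue spokes $da$, $db$, $dc$, so that $abd$ and $bcd$ are its two triangles; note that each indeed has exactly one red edge, consistent with Corollary~\ref{odd-cycle}.

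First I would fix the even end of the labeling. Since $ab$ and $bc$ are red, any color-satisfying labeling has $L(ab)=L(bc)=2$. Using translation invariance I may set $L(b)=2$; then $L(a),L(c)\in\{0,4\}$, and since the labels must be distinct, the reflection automorphism swapping $a$ and $c$ lets me take $L(a)=0$ and $L(c)=4$ without loss of generality. Thus, up to isomorphism, the labels on $a,b,c$ are forced to be $0,2,4$.

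Next I would analyze $d$. Because $d$ is joined by blue edges to the three even-labeled vertices $0,2,4$, the label $L(d)$ must satisfy that $|L(d)-0|$, $|L(d)-2|$, and $|L(d)-4|$ are all prime; in particular $L(d)$ is odd. The key observation, exactly as in the Proposition preceding Lemma~\ref{red-cycle}, is that $0$, $2$, $4$ occupy all three residue classes modulo $3$. Hence one of the three differences $L(d)$, $L(d)-2$, $L(d)-4$ is divisible by $3$, and since it is prime it must equal $\pm 3$. This confines $L(d)$ to the finite set $\{-3,-1,1,3,5,7\}$.

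Finally I would check these six candidates directly: every value except $L(d)=-3$ and $L(d)=7$ produces a difference equal to $1$ and hence fails, while the two survivors give prime differences $\{3,5,7\}$ in both cases. These two survivors are interchanged by the affine map $x\mapsto 4-x$ composed with the automorphism swapping $a$ and $c$, so they represent the same labeling up to isomorphism. Therefore the unique color-satisfying prime distance labeling up to isomorphism is $(a,b,c,d)=(0,2,4,7)$. The only real computational content is this short finite case-check; the step demanding care is the isomorphism bookkeeping, namely correctly tracking which symmetries (translation, negation, and the reflection swapping $a$ and $c$) are permitted, so that the two arithmetic solutions are genuinely identified and the uniqueness claim reads exactly as stated.
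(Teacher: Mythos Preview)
Your proof is correct and follows essentially the same route as the paper's: both first pin down $L(a),L(b),L(c)=0,2,4$ from the two red edges, then use the three blue spokes at $d$ together with a mod-$3$ argument to force $L(d)\in\{-3,7\}$, and finally identify these two solutions via the reflection swapping $a$ and $c$. Your write-up is a bit more explicit than the paper's about the mod-$3$ step and the isomorphism bookkeeping (the paper simply asserts the three edge labels must be $\{3,5,7\}$ and invokes the $A\leftrightarrow C$ symmetry), but the underlying argument is the same.
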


\begin{proof}
We construct a color-satisfying prime distance labeling $L$ of $CF$.  Since the edge $ab$ is red, we may assume $L(a)=0$ and $L(b)=2$. Since labels on
vertices are unique, this forces $L(c)=4$.

Suppose $L(d)=x$.  Then $L(A)=x$, $L(B)=|x-2|$, and $L(C)=|x-4|$ are all prime, so they must be the numbers 3, 5, and 7.  Since there exists an
automorphism swapping $A$ and $C$, we may choose $L(A)=7$, $L(B)=5$, and $L(C)=3$, so $L(d)=7$.  This is a valid color-satisfying prime distance
labeling of $CF$.
\end{proof}

\setlength{\cfk}{0.5cm}
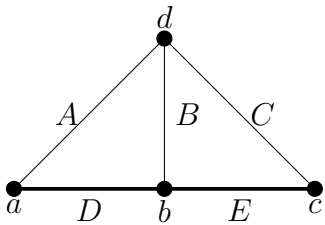
\begin{figure}[h!]
\begin{center}
\begin{tikzpicture}
\draw(0,0)node[anchor=north]{$a$}--(1,0)node[anchor=north]{$D$}--(2,0)node[anchor=north]{$b$}--(3,0)node[anchor=north]{$E$}--(4,0)node[anchor=north]{$c$}
--(3,1)node[anchor=west]{$C$}--(2,2)node[anchor=south]{$d$}--(1,1)node[anchor=east]{$A$}--(0,0);
\draw(2,0)--(2,1)node[anchor=west]{$B$}--(2,2);
\draw[line width=0.05 cm](0,0)--(2,0)--(4,0);
\draw[fill=black](0,0) circle(1mm);
\draw[fill=black](2,2) circle(1mm);
\draw[fill=black](2,0) circle(1mm);
\draw[fill=black](4,0) circle(1mm);
\end{tikzpicture}
\caption{The edge-colored graph $CF$} \label{fan}
\end{center}
\end{figure}

We now define the edge-colored graph $CF_k$ for all $k \geq 1$ ($CF$ stands for colored fan). $CF_k$ has vertices $\{a_1, a_2, \ldots, a_{k+3},$ $b_1, b_2, \ldots, b_k\}$ and is comprised of three paths and two extra edges: the red-edge path $a_1, a_2, \ldots, a_{k+3}$, the red-edge path $b_1, b_2, \ldots, b_k$, the blue-edge path $a_2, b_1, a_3, b_2, \ldots, a_{k+1}, b_k,$ $a_{k+2}$, and the two blue edges $a_1 b_1$ and $a_{k+3} b_k$.  The graphs $CF_1$, $CF_2$, and $CF_k$ are shown in Figure~\ref{CFk}.

\begin{figure}[h!]
\begin{center}\includegraphics[width=4in]{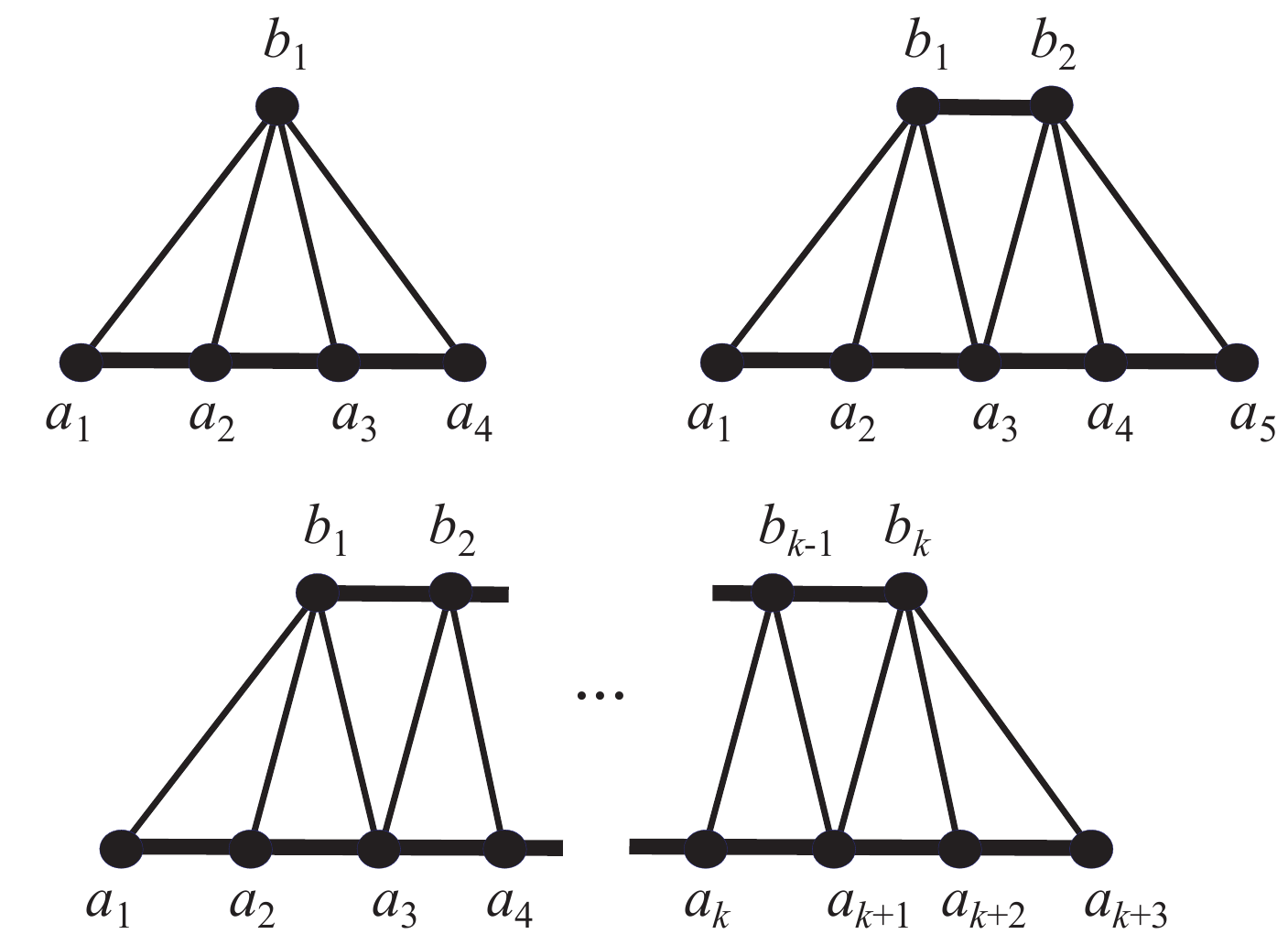}
\end{center}
\caption{The edge-colored graphs $CF_1$, $CF_2$, and $CF_k$.} \label{CFk}
\end{figure}
\bigskip

\begin{lemma}
$CF_k$ has no color-satisfying prime distance labeling for all $k \geq 1$.
\end{lemma}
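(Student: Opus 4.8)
The plan is to exploit the rigidity forced by the two red paths: any color-satisfying prime distance labeling must place the $a$-vertices, and separately the $b$-vertices, in arithmetic progressions with common difference $2$, after which only a couple of discrete parameters remain. I then intend to show that the two ``fan'' endpoints $b_1$ and $b_k$ cannot simultaneously be positioned correctly relative to the $a$-path.

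First I would normalize. Since every edge of the path $a_1, a_2, \ldots, a_{k+3}$ is red, in any color-satisfying labeling $L$ these vertices receive values differing by exactly $2$ along the path, and since labels are distinct the progression is monotone. Composing with an affine map $x \mapsto \pm x + c$ (which preserves both ``difference $2$'' and ``difference prime''), I may assume $L(a_i) = 2(i-1)$, so $L(a_1), \ldots, L(a_{k+3}) = 0, 2, \ldots, 2(k+2)$. The red path $b_1, \ldots, b_k$ then forces $L(b_i) = L(b_1) + \varepsilon\, 2(i-1)$ for a single sign $\varepsilon \in \{+1, -1\}$, so the only remaining freedom is the value $L(b_1)$ and the sign $\varepsilon$.

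Next I would pin down the two endpoints. The vertex $b_1$ is blue-adjacent to $a_1, a_2, a_3$, that is, to $0, 2, 4$, so $|L(b_1)|$, $|L(b_1)-2|$, $|L(b_1)-4|$ are all prime. By the same modulo-$3$ reasoning used in Lemma~\ref{CF}, one of these three values is divisible by $3$ and must therefore equal $\pm 3$; checking the six resulting candidates leaves only $L(b_1) \in \{-3, 7\}$. Applying the identical argument to $b_k$, which is blue-adjacent to $a_{k+1}, a_{k+2}, a_{k+3} = 2k, 2k+2, 2k+4$, yields $L(b_k) - 2k \in \{-3, 7\}$, i.e. $L(b_k) \in \{2k-3,\, 2k+7\}$. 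I would then combine these with the rigid spacing $L(b_k) - L(b_1) = \varepsilon\, 2(k-1) = \pm 2(k-1)$: tabulating the four combinations of $L(b_1)$ and $L(b_k)$ against $\pm 2(k-1)$, every case is impossible except $L(b_1) = 7$, $L(b_k) = 2k-3$ with the decreasing sign, which forces $k = 3$. This lone survivor is eliminated using an interior vertex: for $k=3$ the decreasing progression gives $L(b_2) = 5$, but $b_2$ is blue-adjacent to $a_3$ with $L(a_3) = 4$, and $|5 - 4| = 1$ is not prime. The boundary case $k = 1$, in which $b_1 = b_k$ is blue-adjacent to all of $0, 2, 4, 6$, is handled directly, since neither $-3$ nor $7$ makes $|L(b_1) - 6|$ prime.

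I expect the main obstacle to be exactly the near-miss at $k = 3$: the two endpoint constraints, which do most of the work, do not by themselves close the argument, and ruling out the surviving configuration requires the extra information carried by an interior fan vertex. A secondary point requiring care is the normalization step, where I must justify that once the $a$-path is fixed to $0, 2, 4, \ldots$, the sign $\varepsilon$ of the $b$-progression is genuinely the only remaining degree of freedom; the affine-invariance of the coloring conditions is what makes this reduction legitimate.
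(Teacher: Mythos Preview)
Your argument is correct. It differs from the paper's proof in organization rather than in spirit. After the same normalization $L(a_i)=2(i-1)$, the paper invokes Lemma~\ref{CF} to set $L(b_1)=7$ and then propagates forward one step at a time: the red edge $b_1b_2$ gives $L(b_2)\in\{5,9\}$, the blue edge $b_2a_3$ kills $5$, so $L(b_2)=9$; the red paths then force $L(a_i)=2(i-1)$ and $L(b_i)=2i+5$ for all $i$, and the terminal blue edge $a_{k+3}b_k$ has label $|(2k+4)-(2k+5)|=1$. Your approach is two-ended: you pin both $L(b_1)\in\{-3,7\}$ and $L(b_k)\in\{2k-3,2k+7\}$ by the same fan argument and match them against the rigid spacing $\pm 2(k-1)$, which collapses everything except the $k=3$ survivor that you then kill with the interior edge $b_2a_3$.

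What each buys: the paper's step-by-step propagation never produces an exceptional $k$, so there is no residual case to chase down; on the other hand, as written it takes $L(b_1)=7$ directly from Lemma~\ref{CF}, whose ``up to isomorphism'' clause uses the $a\leftrightarrow c$ swap that does not extend to $CF_k$, so the parallel $L(b_1)=-3$ branch is tacitly omitted (it yields the same contradiction with terminal label $9$). Your symmetric treatment keeps both values of $L(b_1)$ in play throughout and is therefore a bit more explicit on that point, at the cost of the small $k=3$ clean-up.
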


\begin{proof}
By way of contradiction, suppose that $CF_k$ has a color-satisfying prime distance labeling $L$.  The induced subgraph of $CF_k$ on the vertices $a_1$, $a_2$, $a_3$, and $b_1$ is $CF$, so by Lemma~\ref{CF}, $L(a_1)=0$, $L(a_2)=2$, $L(a_3)=4$, and $L(b_1)=7$.  In the case $k=1,$ this forces $a_4$ to have label 6, which makes $a_4b_1$ non-prime.  Now assume $k>1.$

Since edge $b_1 b_2$ is red, $L(b_2)$ is either 5 or 9.  But if $L(b_2)=5$ then $L(a_3 b_2)=1$, which is not prime, so $L(b_2)=9$.  Since the edges $a_3 a_4$, $a_4 a_5$, $\ldots,$ $a_{k+2} a_{k+3}$ and the edges $b_2 b_3$, $b_3 b_4$, $\ldots$, $b_{k-1} b_k$ are all red, this forces the labeling on the remaining vertices $L(a_4)=6$, $L(a_5)=8$, $\ldots,$ $L(a_{k+3})=2k+4$ and $L(b_3)=11$, $L(b_4)=13$, $\ldots$, $L(b_k)=2k+5$.  But this means $L(a_{k+3}b_k)=1$, which is not prime.

\end{proof}

\begin{theorem}
$\Circ(2n+1,2)$ is not a prime distance graph for all $n \geq 2$.
\end{theorem}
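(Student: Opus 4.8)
The plan is to assume for contradiction that $\Circ(2n+1,2)$ admits a prime distance labeling $L$ and to show that its induced edge-coloring must contain a color-satisfying copy of some $CF_k$, contradicting the preceding lemma that $CF_k$ has no color-satisfying prime distance labeling. Write $N=2n+1$. Color an edge red when its label difference is exactly $2$ and blue when it is an odd prime; since every difference is prime, this colors all edges. The generating cycles are the outer $N$-cycle and the triangles $g_m=v_mv_{m+1}v_{m+2}$, and by Corollary~\ref{odd-cycle} each $g_m$ has exactly one red edge, while by Theorem~\ref{2-odd-characterization} no vertex has red-degree exceeding $2$.

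First I would record the forced local structure. Two consecutive outer edges $v_mv_{m+1}$ and $v_{m+1}v_{m+2}$ lie in $g_m$ and so cannot both be red; hence the red outer edges are pairwise non-adjacent and cut the outer cycle into maximal \emph{blue arcs}. Inside a blue arc, any triangle $g_m$ whose two outer edges both lie in the arc must take its unique red edge to be the chord $v_mv_{m+2}$, so all such chords are red. Let $R$ be the number of red outer edges. Applying the parity argument of Lemma~\ref{red-cycle} to the odd outer cycle shows $R$ is odd. I would then rule out $R=1$: in that case every chord of the single blue arc is red, and together with the lone red outer edge the even-indexed vertices close into an all-red cycle, contradicting Lemma~\ref{red-cycle}. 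Hence $R\ge 3$. (This already settles $N=5$, where $R=1$ is the only option, consistent with $K_5$ not being $2$-odd by Proposition~\ref{K5}.)

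Next I would locate an even-length arc. The $R$ arc lengths sum to $N-R$, which is even, and $R$ is odd; an odd number of summands with even total must contain an even number of odd summands, so at least one arc has even length $\ell=2k\ge 2$. The heart of the argument is to read a correctly-colored $CF_k$ off this arc. Writing the arc as $v_i,\dots,v_{i+\ell}$ with red boundary edges $v_{i-1}v_i$ and $v_{i+\ell}v_{i+\ell+1}$, the red chords split the arc into two parallel red paths, namely the even-position path $v_i,v_{i+2},\dots,v_{i+\ell}$ and the odd-position path $v_{i+1},v_{i+3},\dots,v_{i+\ell-1}$, while the blue outer edges weave between them in a zigzag. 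Attaching the two red boundary edges lengthens the even path by one vertex at each end, giving red paths of sizes $k+3$ and $k$; finally the chords $v_{i-1}v_{i+1}$ and $v_{i+\ell-1}v_{i+\ell+1}$ are blue, each lying in a triangle whose red edge is a boundary edge, and serve as the two closing blue edges $a_1b_1$ and $a_{k+3}b_k$. This is exactly $CF_k$, and since $R\ge 3$ forces $\ell\le N-5$, the $\ell+3$ vertices used are distinct.

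The restriction of $L$ to this subgraph is then a color-satisfying prime distance labeling of $CF_k$ with $k\ge 1$, contradicting the lemma and completing the proof. I expect the main obstacle to be the $CF_k$ extraction of the third stage: verifying carefully that the even-length arc, its two boundary edges, and the two extra chords reproduce every edge of $CF_k$ with the correct color and no repeated vertex, and in particular tracking the parity of positions so that the longer and shorter red paths land with lengths precisely $k+3$ and $k$. By comparison, producing an even-length arc and excluding $R=1$ are short parity computations.
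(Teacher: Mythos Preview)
Your proof is correct and follows essentially the same strategy as the paper: both arguments hinge on extracting a color-satisfying copy of $CF_k$ from an even-length blue arc between consecutive red outer edges and invoking the lemma that $CF_k$ admits no such labeling. The organization is contrapositive to the paper's---where the paper shows each arc must be odd (else $CF_k$ appears) and then uses label parities to force $R$ even, you fix $R$ odd up front and use the pigeonhole parity on arc lengths to locate an even arc directly. Your explicit treatment of the case $R=1$ (via the all-red cycle $v_1 v_2 v_4 v_6 \cdots v_{N-1} v_1$) is a small improvement: the paper's phrasing ``the next red outside edge $v_iv_{i+1}$'' tacitly assumes a second red outer edge exists, and your argument closes that gap cleanly.
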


\begin{proof}
Suppose by way of contradiction that $L$ is a prime distance labeling of $\Circ(2n+1, 2)$.  Since the outside edges of $\Circ(2n+1, 2)$ form an
odd cycle and the inside edges form an odd cycle, by Lemma~\ref{odd-cycle}, $\Circ(2n+1,2)$ must have at least one red outside edge and at least one red inside edge.

Suppose without loss of generality that the edge $v_1 v_2$ is red, and $L(v_1)=0$ and $L(v_2)=2$. Note that $v_2 v_3$ must be blue by
Lemma~\ref{red-cycle} since vertices $v_1$, $v_2$, and $v_3$ form a 3-cycle. Suppose $v_i v_{i+1}$ is the next red outside edge, i.e. $v_i
v_{i+1}$ is red and $v_j v_{j+1}$ is blue for all $2 \leq j \leq i-1$. Consider the induced subgraph $G$ of $\Circ(2n+1, 2)$ on the vertices
$v_1, \ldots, v_{i+1}$. If $i$ is even (and thus $i\ge 4$), we will show that the coloring on the edges of $G$ induced by the prime distance labeling $L$ of $\Circ(2n+1, 2)$ yields the edge-colored graph $CF_{i/2-1}$.  Since $CF_{i/2-1}$ has no color-satisfying prime distance labeling, this will prove that $i$ must be odd.

Since $v_1 v_2$ is red, $v_1 v_3$ must be blue by Corollary~\ref{odd-cycle}.  Since $v_i v_{i+1}$ is red, $v_{i-1} v_{i+1}$ must be blue by
Corollary~\ref{odd-cycle}.  For all $2 \leq j \leq i-2$, since $v_j v_{j+1}$ and $v_{j+1} v_{j+2}$ are blue, $v_j v_{j+2}$ must be red by
Corollary~\ref{odd-cycle}.  This is exactly the edge-coloring of the graph $CF_{i/2-1}$, with $v_1=a_1$, $v_2=a_2$, $v_3=b_1$, $v_4=a_3$, $v_5=b_2$,
$v_6=a_4$, $\ldots,$ $v_{i-1}=b_k$, $v_i=a_{k+2}$, $v_{i+1}=a_{k+3}$.

Thus $i$ must be odd.  Note that vertices with a red edge between them have labels with the same parity, and vertices with a blue edge between
them have labels with different parity.  Since the number of blue edges between $v_2$ and $v_i$ is $i-2$, which is odd, $L(v_i)$ is odd, so
$L(v_{i+1})$ is also odd. Continuing in this way, if $v_j v_{j+1}$ is the next red edge, $L(v_j)$ is even and $L(v_{j+1})$ is even.  Thus the
labels on the vertices of the outside red edges alternate parity in pairs. This implies that there are an even number of outside red edges,
contradicting Lemma~\ref{odd-cycle}.

\end{proof}

Since we know that $\Circ(2n+1,2)$ is a 2-odd graph if $n>2$, we also obtain the following corollary.
\bigskip

\begin{corollary} Not every 2-odd graph is a prime distance graph. \label{odd-not-prime} \end{corollary}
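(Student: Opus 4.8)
The plan is to exhibit a single explicit infinite family of graphs that are 2-odd yet fail to be prime distance graphs; both ingredients needed are already established. First I would invoke the theorem immediately preceding this corollary, which shows that $\Circ(2n+1,2)$ is not a prime distance graph for every $n \geq 2$. Second, I would appeal to the characterization of 2-odd circulant graphs, which states that $\Circ(n,k)$ is 2-odd precisely when $(n,k) \neq (5,2)$.

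The key observation is that for $n > 2$ we have $2n+1 \geq 7$, so in particular $(2n+1,2) \neq (5,2)$. Hence the characterization theorem guarantees that $\Circ(2n+1,2)$ is 2-odd for every $n > 2$, while the preceding theorem guarantees it is not prime distance. Combining the two, for any fixed $n > 2$---for concreteness one may take $\Circ(7,2)$, corresponding to $n=3$---the graph $\Circ(2n+1,2)$ is simultaneously 2-odd and not a prime distance graph. This is exactly a witness to the claim that not every 2-odd graph is a prime distance graph, so the corollary follows.

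I expect no real obstacle here: all the difficulty has already been absorbed into the preceding theorem (whose proof reduces matters to the forbidden color-satisfying subgraph $CF_{i/2-1}$) and into the circulant 2-odd characterization. The only point requiring a moment's care is the bookkeeping that the restriction $n > 2$ is precisely what places us strictly above the exceptional pair $(5,2)$, so that both hypotheses---``2-odd'' and ``not prime distance''---apply simultaneously to the same graph.
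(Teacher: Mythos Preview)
Your proposal is correct and mirrors the paper's own argument exactly: the paper simply remarks that since $\Circ(2n+1,2)$ is 2-odd for $n>2$ (by the circulant 2-odd characterization) and not prime distance (by the preceding theorem), the corollary follows. Your bookkeeping on $n>2$ versus the exceptional pair $(5,2)$ is precisely the one subtlety the paper is implicitly handling.
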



A {\bf\textit{ bowtie path}} in $\Circ(2n+1,n)$ is a path of the form $$v_a v_{a+n+1} v_{a+n+2} v_{a+n+3} \ldots v_{a+n+r} v_{a+r}$$ for some positive integers $a$ and $r$.  Figure \ref{bowtie-path} shows an example of a bowtie path.

\begin{figure}
\begin{center}
\includegraphics[width=2.5in]{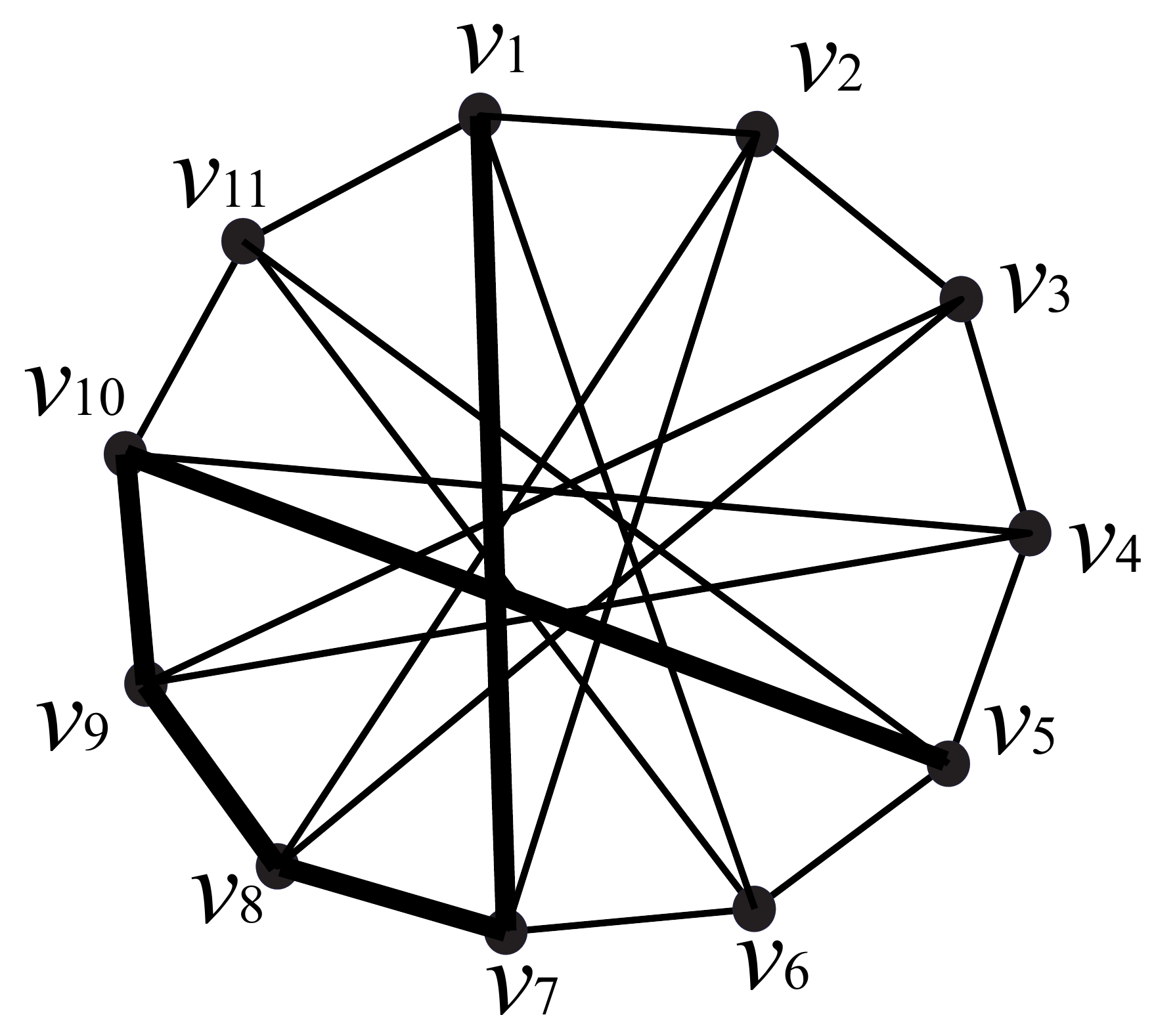}
 \caption{A bowtie path in $\Circ(2n+1,n)$, with $n=5$, $a=1$, and $r=4$.}\label{bowtie-path}
\end{center}
\end{figure}
\bigskip

\begin{lemma} \label{opposite-reds} If in a 2-odd coloring of $\Circ(2n+1,n)$, the edges $v_a v_{a+1}$ and $v_{a+1} v_{a+2}$ are red, then so is the edge $v_{a+n+1} v_{a+n+2}$. \end{lemma}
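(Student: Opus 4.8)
The plan is to produce a small triangle on which Corollary~\ref{odd-cycle} does all the work. First I would extract the consequence of the hypothesis at the central vertex $v_{a+1}$: the two red edges $v_a v_{a+1}$ and $v_{a+1} v_{a+2}$ already give $v_{a+1}$ red-degree $2$. Since a $2$-odd coloring obeys the red-degree bound (Condition~\ref{red-degree-circ} of Theorem~\ref{2-odd-circ-char}, equivalently the forward direction of Theorem~\ref{2-odd-characterization}), every \emph{other} edge incident to $v_{a+1}$ must be blue. In particular the two inside edges at $v_{a+1}$ are blue.

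Next I would pin down exactly which inside edges those are. In $\Circ(2n+1,n)$ the inside neighbors of a vertex $v_i$ are $v_{i+n}$ and $v_{i-n}$, and since $-n \equiv n+1 \pmod{2n+1}$ these are the \emph{consecutive} vertices $v_{i+n}$ and $v_{i+n+1}$. Applying this to $v_{a+1}$, its inside neighbors are $v_{a+1+n}=v_{a+n+1}$ and $v_{a+1-n}=v_{a+n+2}$ (reducing $a+1-n$ modulo $2n+1$). Thus both inside edges $v_{a+1}v_{a+n+1}$ and $v_{a+1}v_{a+n+2}$ are present, and by the previous step both are blue. The crucial observation is that $v_{a+n+1}$ and $v_{a+n+2}$ are consecutive, hence joined by an outside edge, so $v_{a+1}$, $v_{a+n+1}$, $v_{a+n+2}$ form a $3$-cycle. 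By Corollary~\ref{odd-cycle} this $3$-cycle contains exactly one red edge; its two edges meeting $v_{a+1}$ were just shown to be blue, so the remaining edge $v_{a+n+1}v_{a+n+2}$ must be red, which is the claim.

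The only genuine content is spotting this triangle, and I expect that to be the sole obstacle: it exists precisely because in $\Circ(2n+1,n)$ the two inside neighbors of any vertex are consecutive (differing by $1$, being $v_{i+n}$ and $v_{i+n+1}$), a feature special to the relation $k=n$ on an odd number $2n+1$ of vertices. Once the triangle is identified, the red-degree bound and Corollary~\ref{odd-cycle} finish the argument immediately, with no further computation required. I would also note in passing that the three vertices are distinct for $n \geq 2$, so the $3$-cycle is nondegenerate.
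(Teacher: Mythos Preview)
Your proof is correct, and it lands on exactly the same final triangle $v_{a+1},\,v_{a+n+1},\,v_{a+n+2}$ that the paper uses (with $a=1$: $v_2,\,v_{n+2},\,v_{n+3}$). The difference is in how you establish that the two inside edges at $v_{a+1}$ are blue. The paper does this purely with Corollary~\ref{odd-cycle}: it applies the corollary to the triangles $v_a,\,v_{a+1},\,v_{a+n+1}$ and $v_{a+1},\,v_{a+2},\,v_{a+n+2}$, each of which already contains one of the hypothesized red outside edges, so the other two edges---in particular $v_{a+1}v_{a+n+1}$ and $v_{a+1}v_{a+n+2}$---are forced blue. You instead invoke the red-degree bound (Condition~\ref{red-degree} of Theorem~\ref{2-odd-characterization}) at $v_{a+1}$ directly, which dispatches both edges in one step. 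Your route is a bit shorter and avoids the two auxiliary triangles; the paper's route has the minor advantage of relying on a single tool (Corollary~\ref{odd-cycle}) throughout. Both are entirely sound, and your observation that the two inside neighbors of any vertex in $\Circ(2n+1,n)$ are consecutive is precisely what makes that final triangle exist.
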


\begin{proof} Without loss of generality, let $a=1$.  Since $v_1 v_2$ is red, we must have both $v_1 v_{n+2}$ and $v_2 v_{n+2}$ blue since $v_1$, $v_2$, and $v_{n+2}$ form a triangle.  Similarly, since $v_2 v_3$ is red, $v_2 v_{n+3}$ and $v_3 v_{n+3}$ are blue.  But then we have $v_2 v_{n+2}$ and $v_2 v_{n+3}$ both blue, so $v_{n+2} v_{n+3}$ must be red since it is the third edge in the triangle with vertices $v_2$, $v_3$, and $v_{n+3}$. \end{proof}

\begin{lemma} \label{no-bowtie} If $\Circ(2n+1,n)$ is edge-colored so that a bowtie path in $\Circ(2n+1,n)$ has only red edges, then it has no color-satisfying prime-distance labeling. \end{lemma}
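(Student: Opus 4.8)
The plan is to argue by contradiction: assume the given edge-coloring, in which some bowtie path is entirely red, admits a color-satisfying prime-distance labeling $L$, and then to locate inside $\Circ(2n+1,n)$ a copy of the colored fan $CF_{r-1}$ whose edge-colors are forced to agree with the definition of $CF_{r-1}$. Restricting $L$ to this copy would yield a color-satisfying prime-distance labeling of $CF_{r-1}$, contradicting the earlier lemma that $CF_k$ admits none for any $k\ge 1$.

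First I would fix notation, taking $a=1$ without loss of generality by vertex-transitivity, so the red bowtie path is $v_1, v_{n+2}, v_{n+3}, \ldots, v_{n+r+1}, v_{r+1}$. The structural heart of the argument is the observation that, modulo $2n+1$, we have $-n \equiv n+1$ and $2n \equiv -1$; hence the two inside neighbors of a ``bottom'' vertex $v_{n+1+j}$ are exactly $v_j$ and $v_{j+1}$. Thus the ``top'' path $v_1, v_2, \ldots, v_{r+1}$ and the bottom path $v_{n+2}, \ldots, v_{n+r+1}$ are joined in the fan/zigzag pattern, with $v_{n+2}$ meeting $v_1,v_2$, then $v_{n+3}$ meeting $v_2,v_3$, and so on. Matching $a_1 = v_1$, $a_{i+1} = v_{n+1+i}$ for $1\le i\le r$, $a_{r+2}=v_{r+1}$, and $b_i = v_{i+1}$ for $1\le i\le r-1$ identifies the underlying graph with $CF_{r-1}$ (note $k+3 = r+2$).

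Next I would force all the colors. Each interior bottom vertex already meets two red bowtie edges, and since a red edge forces its endpoints' labels to differ by exactly $2$ while labels are distinct, no vertex can meet a third red edge; hence the remaining inside edges at these vertices are blue, which colors the entire blue zigzag $a_2 b_1 a_3 b_2 \cdots$. Then Corollary~\ref{odd-cycle} does the rest: the triangle $v_1 v_2 v_{n+2}$ contains the red edge $v_1 v_{n+2}$, forcing the edge $v_1 v_2$ (which is $a_1 b_1$) blue, and symmetrically $v_r v_{r+1}$ (which is $a_{r+2} b_{r-1}$) is blue; meanwhile each triangle $v_i v_{i+1} v_{n+1+i}$ already has its two inside edges blue, so its third edge $v_i v_{i+1}$ must be red for $2 \le i \le r-1$, making $b_1, \ldots, b_{r-1}$ the required red path. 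With every edge now colored as in $CF_{r-1}$, the restriction of $L$ contradicts the $CF_k$ lemma.

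I expect the main obstacle to be the bookkeeping in this color-forcing step: one must verify that the relevant triangles genuinely exist in $\Circ(2n+1,n)$, that each exhibits exactly the claimed red/blue pattern, and that the vertices $v_1, \ldots, v_{r+1}, v_{n+2}, \ldots, v_{n+r+1}$ are all distinct (which holds precisely because the bowtie path is a genuine path, forcing $1 \le r \le n$). Finally I would dispose of the degenerate case $r=1$ separately: there the path $v_1\,v_{n+2}\,v_2$ together with the outside edge $v_1 v_2$ forms a triangle carrying the two red edges $v_1 v_{n+2}$ and $v_{n+2} v_2$, an immediate contradiction with Corollary~\ref{odd-cycle}.
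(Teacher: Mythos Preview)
Your argument is correct, and the bookkeeping you flag as the main risk does go through: with $a=1$ and $1\le r\le n$ (forced by the bowtie path being a genuine path), the $2r+1$ vertices $v_1,\ldots,v_{r+1},v_{n+2},\ldots,v_{n+r+1}$ are distinct, every bottom vertex $v_{n+1+j}$ already carries two red bowtie edges so its remaining incidences are blue, and then each triangle $v_i v_{i+1} v_{n+i+1}$ forces the top colors exactly as you describe. The identification with $CF_{r-1}$ is on the nose, and the $r=1$ endgame is handled cleanly.

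Your route differs from the paper's. The paper does not invoke the $CF_k$ lemma here; instead it labels the red bowtie path $0,2,4,\ldots,2(r+1)$, uses Lemma~\ref{opposite-reds} to force $v_2v_3,\ldots,v_{r-1}v_r$ red, observes that $v_2$ is adjacent to labels $0,2,4$ (all residues mod $3$) so $L(v_2)\in\{-3,7\}$, and then walks along the forced red top path to reach a terminal edge of length $1$ or $9$. In effect the paper inlines (a two-branch version of) the $CF_k$ argument. Your reduction is more modular---it reuses the $CF_k$ machinery already built for the $\Circ(2n+1,2)$ case rather than reproving it---at the cost of a little extra work verifying that the induced colors match $CF_{r-1}$ exactly. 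Both approaches ultimately rest on the same triangle-forcing via Corollary~\ref{odd-cycle}.
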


\begin{proof} Suppose by way of contradiction that $\Circ(2n+1,n)$ has such a prime-distance labeling.  Without loss of generality, let $a=1$, so the all-red bowtie path in $\Circ(2n+1,n)$ is on the vertices $v_1$,$v_{n+2}$, $v_{n+3}$, $\ldots$, $v_{n+r+1}$, $v_{r+1}$.  Again without loss of generality, we may assume that these vertices are labeled $0$, $2$, $4$, $6$, $\ldots$, and $2(r+1)$.  By Lemma \ref{opposite-reds}, the edges
$v_{n+3+n} v_{n+4+n}=v_2 v_3$, $\ldots$, $v_{n+r+n} v_{n+r+1+n}=v_{r-1} v_r$ are also red.  Note that if $r=2$ then there are no edges of this form.  In either case, the edge $v_1 v_2$ must be blue since it belongs to the triangle with vertices $v_1$, $v_2$, $v_{n+2}$ which already has the red edge $v_1 v_{n+2}$.

Consider the possible labels on the vertex $v_2$.  This vertex is adjacent to vertices with labels $0$, $2$, and $4$, which represent all
equivalence classes modulo three.  Thus one of the distances between $v_2$ and these three neighbors must be a multiple of 3 and prime, so it must be exactly three.  Thus, $v_2$ must be labeled with either $-3$ or $7$.  Now we may proceed along the new red path given by Lemma \ref{opposite-reds}.

The next label along this path after a $-3$ could be $-5$ since it is adjacent to a vertex labeled $4$, and the next label after a $7$ could be $5$ for the same reason.  Thus the red path beginning at $v_2$ must have labels $-3$, $-1$, $1$, $\ldots$, $2r-7$ or $7$, $9$, $11$, $\ldots$, $2r+3$.  But the terminal vertex in this path, which is labeled either $2r-7$ or $2r+3$, is adjacent to $v_{r+1}$, which is labeled with $2(r+1)$.  The distance between these two vertices is thus either $9$ or $1$, neither of which is prime. \end{proof}

Define the \textit{\textbf{red path}} $RP_m$ and the \textit{\textbf{blue path}} $BP_m$ as the edge-colored subgraphs of $\Circ(2n+1,n)$ shown in Figure~\ref{red-path}.
\bigskip

\begin{theorem} $\Circ(2n+1,n)$ is not a prime distance graph for all $n \geq 2$. \end{theorem}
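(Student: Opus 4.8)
My plan is to reduce to the theorem already proved above that $\Circ(2n+1,2)$ is not a prime distance graph, by exhibiting an isomorphism $\Circ(2n+1,n)\cong\Circ(2n+1,2)$. Define $\phi(v_i)=v_{2i}$ with subscripts read modulo $2n+1$; since $\gcd(2,2n+1)=1$ this is a bijection of the vertex set. In $\Circ(2n+1,n)$ two vertices are adjacent exactly when their indices differ by $\pm 1$ (outside edges) or $\pm n$ (inside edges) modulo $2n+1$. Applying $\phi$ doubles index differences, sending $\pm 1$ to $\pm 2$ and sending $\pm n$ to $\pm 2n\equiv\mp 1\pmod{2n+1}$; in both cases the image joins indices differing by $\pm 1$ or $\pm 2$, i.e.\ an edge of $\Circ(2n+1,2)$. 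As $\phi$ is a vertex bijection carrying edges to edges and both graphs are $4$-regular on $2n+1$ vertices (hence have equally many edges), $\phi$ is a graph isomorphism. Being a prime distance graph is an isomorphism invariant, so since $\Circ(2n+1,2)$ is not a prime distance graph for $n\ge 2$, neither is $\Circ(2n+1,n)$. This settles the statement in one step.

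Because the paper has just assembled the bowtie machinery, I would also record the self-contained argument it points toward. The plan is to assume a prime distance labeling $L$ exists and pass to its induced $2$-odd coloring. Both the outside $(2n+1)$-cycle and the inside $(2n+1)$-cycle are odd, so by Lemma~\ref{red-cycle} and Corollary~\ref{odd-cycle} each carries an odd number of red edges, in particular at least one; and since Lemma~\ref{red-cycle} forbids an all-red cycle while Condition~\ref{red-degree} bounds red-degree by $2$, the red subgraph is a disjoint union of paths. From a pair of consecutive red outside edges one applies Lemma~\ref{opposite-reds} to force red edges on the opposite arc of the circle, the goal being to manufacture an all-red bowtie path and then invoke Lemma~\ref{no-bowtie} for the contradiction. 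The configurations $RP_m$ and $BP_m$ supply the natural case split: one shows the forced structure either completes to an $RP_m$ containing an all-red bowtie, or else forces a $BP_m$ incompatible with the odd red-edge counts on the two generating odd cycles.

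The main obstacle in this direct route is that the conclusion cannot be purely combinatorial. Indeed $\Circ(2n+1,n)$ \emph{is} $2$-odd for $n\ge 3$, so there exist $2$-odd colorings with no all-red bowtie path; the argument must therefore use the actual arithmetic of the labels and not merely the coloring conditions. The delicate case is the one in which the red outside edges are sparse—no two consecutive, so Lemma~\ref{opposite-reds} never triggers—where one must track labels around the inside cycle directly, using modulo-$3$ reasoning of the kind appearing earlier: a vertex adjacent to labels hitting all three residues modulo $3$ must have a neighbor at distance exactly $3$, which pins down its label and eventually forces a difference of $1$ or $9$. This is precisely the bookkeeping that the isomorphism avoids, which is why I would present the isomorphism argument first.
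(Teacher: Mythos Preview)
Your isomorphism argument is correct and is a genuinely different route from the paper's. The map $\phi(v_i)=v_{2i}$ with indices read modulo $2n+1$ is the standard multiplier automorphism of $\mathbb{Z}_{2n+1}$; it carries the connection set $\{\pm 1,\pm n\}$ to $\{\pm 2,\pm 2n\}=\{\pm 2,\mp 1\}$, hence gives an isomorphism $\Circ(2n+1,n)\cong\Circ(2n+1,2)$. Since the paper has already shown that $\Circ(2n+1,2)$ is not a prime distance graph for $n\ge 2$, the theorem follows in one line.

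The paper instead argues directly on $\Circ(2n+1,n)$: it introduces bowtie paths, proves Lemmas~\ref{opposite-reds} and~\ref{no-bowtie}, and then shows that any prime distance coloring forces the edge structure into patterns $RP_m$ and $BP_m$ which, when glued around the cycle, inevitably create an all-red bowtie path and hence a contradiction via Lemma~\ref{no-bowtie}. Your observation that the argument cannot be purely at the level of $2$-odd colorings (because $\Circ(2n+1,n)$ is $2$-odd for $n\ge 3$) is exactly right, and the paper's Lemma~\ref{no-bowtie} is indeed where the arithmetic of labels enters. What your approach buys is brevity and the elimination of all the $RP_m/BP_m$ bookkeeping; what the paper's approach buys is a self-contained treatment of $\Circ(2n+1,n)$ that does not rely on having first handled $\Circ(2n+1,2)$, and machinery (the bowtie lemmas) that could in principle be reused. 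Your second paragraph is an accurate high-level description of the paper's strategy, though as you acknowledge it is a sketch rather than a proof; the isomorphism argument alone suffices.
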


\begin{proof} Suppose by way of contradiction that $\Circ(2n+1,n)$ is a prime distance graph, and consider a prime distance labeling of $\Circ(2n+1,n)$, with corresponding edge coloring.  Since $2n+1$ is odd, the outside cycle must have at least one red edge.
\bigskip

\noindent \textbf{Step 1.} Every maximal red outside path in $\Circ(2n+1,n)$ with vertices $v_i$, $\ldots$, $v_{i+m}$ induces the edge-colored subgraph $RP_m$ shown in Figure~\ref{red-path} as follows:

Since this red path is maximal, the edges $v_{i-1} v_i$ and $v_{m+i} v_{m+i+1}$ are blue.  By Corollary~\ref{odd-cycle}, this induces blue edges $v_i v_{n+i+1}$, $v_{i+1} v_{n+i+1}$, $v_{i+1} v_{n+i+2}$, $v_{i+2} v_{n+i+2}$, $\ldots$, $v_{m+i-1} v_{m+i+n}$, $v_{m+i} v_{m+i+n}$
and red edges $v_{n+i+1} v_{n+i+2},$ $v_{n+i+2} v_{n+i+3}$, $\ldots$, $v_{m+i+n-1} v_{m+i+n}$, as well as $v_{i-1}v_{n+i}.$
By Lemma \ref{no-bowtie}, we cannot have both edges $v_{m+i} v_{m+i+n+1}$ and $v_i v_{n+i}$ red, so we may assume without loss of generality, that $v_i v_{n+i}$ is blue.  Since $v_i v_{n+i+1}$ is also blue, we must have $v_{n+i} v_{n+i+1}$ red.  If $v_{m+i} v_{m+i+n+1}$ were blue as well, then, because $v_{m+i} v_{m+i+n}$ is blue, we would additionally have $v_{m+i+n} v_{m+i+n+1}$ and $v_{m+i+n+1} v_{m+i+1}$ red, contradicting Lemma~\ref{no-bowtie}.
Thus $v_{m+i} v_{m+i+n+1}$ must be red and $v_{m+i+n} v_{m+i+n+1}$ must be blue. 

\begin{figure}[h!]
\begin{center}
\includegraphics[width=\textwidth]{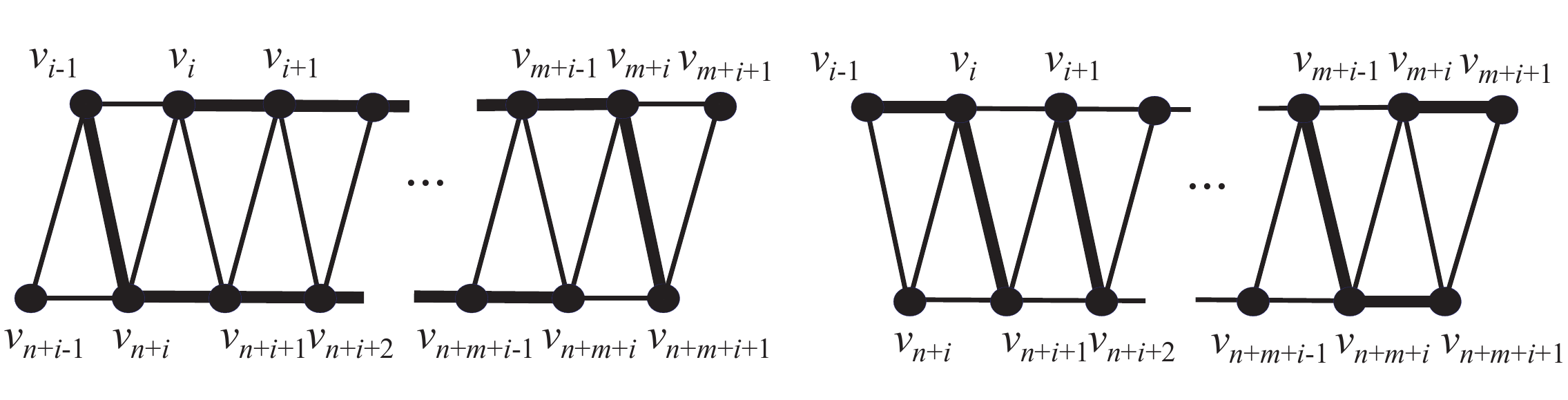}
\caption{The edge-colored graphs $RP_m$ (left) and $BP_m$ (right).} \label{red-path} \label{blue-path}
\end{center}
\end{figure}

\bigskip

\noindent \textbf{Step 2.} Every maximal blue outside path in $\Circ(2n+1,n)$ with vertices $v_i$, $\ldots$, $v_{m+i}$ induces the edge-colored subgraph $BP_m$ shown in Figure~\ref{blue-path}.

Since this blue path is maximal, the edges $v_{i-1} v_i$ and $v_{m+i} v_{m+i+1}$ are red.  By Step 1, either the edge $v_i v_{n+i+1}$ or the edges $v_{n+i} v_{n+i+1}$ and $v_{n+i+1} v_{i+1}$ are red.  Assume $v_i v_{n+i+1}$ is red (the other case is a reflection of this one).  Then by Corollary~\ref{odd-cycle}, $v_{i+1} v_{n+i+1}$ is blue.  If $v_{i+1} v_{i+2}$ is red, then $v_{i+1} v_{n+i+2}$ is blue, $v_{n+i+1} v_{n+i+2}$ is red, and we have $BP_1$.  Otherwise we continue as before, until reaching the first red edge $v_{m+i} v_{m+i+1}$.
\bigskip

\noindent \textbf{Step 3.} The subgraphs $RP_m$ and $BP_m$ cannot fit together to form a valid red-blue edge coloring of $\Circ(2n+1,n)$: to complete the outside cycle (after gluing together whatever copies of $BP_m$ and $RP_m$ appear), the top left corner of $RP_m$ must be adjacent to the bottom right corner of $BP_m.$  However, this results in a red bowtie path: $$\underbrace{v_{m+i-1}v_{n+m+i}v_{n+m+i+1}\ldots v_{n+m+i+r}}_{\mbox{from end of $BP_m$}}\underbrace{v_{n+m+i+r+1}v_{m+i+r}}_{\mbox{from beginning of $RP_m$}}.$$

Therefore $\Circ(2n+1,n)$ is not a prime-distance graph.
\end{proof}


\begin{proposition}If $n$ is even and $k$ is odd, then $\Circ(n,k)$ is a prime distance graph. \end{proposition}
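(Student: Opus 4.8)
The plan is to reduce this statement immediately to the bipartite case already handled. The key observation is that when $n$ is even and $k$ is odd, the graph $\Circ(n,k)$ is bipartite; Theorem~\ref{bipartite} then finishes the proof without further work.

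First I would exhibit a bipartition explicitly. Partition the vertices according to the parity of their index: let $A = \{v_i : i \text{ is odd}\}$ and $B = \{v_i : i \text{ is even}\}$. Because $n$ is even, index parity is well defined with respect to the cyclic structure, i.e.\ reducing an index modulo $n$ does not change its parity. Every outside edge $v_i v_{i+1}$ joins vertices whose indices differ by $1$, hence of opposite parity. Every inside edge $v_i v_{i+k}$ joins indices differing by the odd number $k$ (taken modulo $n$), and again these have opposite parity. Thus every edge of $\Circ(n,k)$ runs between $A$ and $B$, so the graph is bipartite.

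Alternatively, one could appeal to the cycle structure already developed in the section: by Lemma~\ref{generate} every cycle in $\Circ(n,k)$ is a generating cycle or a symmetric difference of generating cycles, each of which has length $k+1$ (even, since $k$ is odd) or length $n$ (even). Since a symmetric difference of even cycles decomposes into even cycles, $\Circ(n,k)$ contains no odd cycle and is therefore bipartite. Either route establishes the same conclusion.

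With bipartiteness in hand, the result follows at once from Theorem~\ref{bipartite}, which asserts that every bipartite graph is a prime distance graph. I expect essentially no obstacle here beyond this bookkeeping: the substantive content---producing a prime distance labeling of an arbitrary bipartite graph---is already supplied by Theorem~\ref{bipartite} via the Green--Tao Theorem. The only point requiring genuine care is the treatment of the wrap-around edges $v_n v_1$ and the inside edges whose endpoints straddle index $n$; this is precisely where the hypotheses ``$n$ even'' and ``$k$ odd'' are used, to guarantee that index parity is preserved under reduction modulo $n$.
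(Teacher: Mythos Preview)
Your proposal is correct and follows exactly the paper's approach: observe that $\Circ(n,k)$ is bipartite when $n$ is even and $k$ is odd, then invoke Theorem~\ref{bipartite}. The paper's proof is a single sentence that asserts bipartiteness without justification, whereas you supply the explicit parity bipartition (and a second argument via generating cycles); this extra detail is fine but not required.
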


\begin{proof} Since $\Circ(n,k)$ is bipartite if $n$ is even and $k$ is odd, the result follows from Theorem~\ref{bipartite}. \end{proof}

\begin{theorem} If $n>7$, then $\Circ(n,3)$ is a prime distance graph. \label{n3-theorem}
\end{theorem}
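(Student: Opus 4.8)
The plan is to split on the parity of $n$. When $n$ is even the graph $\Circ(n,3)$ is bipartite (both $1$ and $3$ are odd, so partitioning $V$ by the parity of the index gives a proper $2$-coloring), hence it is a prime distance graph by Theorem~\ref{bipartite}; this disposes of every even $n$. The work is therefore entirely in the odd case $n \geq 9$, where the outer $n$-cycle is an odd cycle and bipartiteness is unavailable.

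For odd $n$ I would pass to the ``step'' description of a labeling. Writing $a_1, a_2, \ldots, a_n$ for the labels around the outer cycle and $d_i = a_{i+1} - a_i$ (indices mod $n$), a labeling is a prime distance labeling exactly when $\sum_{i=1}^{n} d_i = 0$, each $|d_i|$ is prime (the outside edges $v_i v_{i+1}$), each $|d_i + d_{i+1} + d_{i+2}|$ is prime (the inside edges $v_i v_{i+3}$), and the partial sums are distinct. The key observation is that the block of steps $(2,2,3)$ is self-reinforcing: along any monotone run built from repetitions of $(2,2,3)$ every outside difference lies in $\{2,3\}$ and every inside difference equals $7$, so all of these are automatically prime and, being monotone, the partial sums along the run are automatically distinct. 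A long ascending run of such blocks thus labels the bulk of the cycle for free, and the entire difficulty is pushed into closing the loop.

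To close the loop I would label all but a bounded number of vertices by an ascending run of $(2,2,3)$-blocks, and then spend the last few steps returning the running total to $0$. This return is the same Goldbach/Vinogradov/Ramar\'{e}-style maneuver used in the proof of Theorem~\ref{cycle-theorem}: the (even) deficit created by the ascending run is written as a sum of a bounded number of primes, unconditionally by Ramar\'{e}'s Theorem, and these primes become the few large outside steps near the seam, with intermediate vertices inserted so that consecutive outside differences equal exactly those primes; taking the returning primes large keeps the new labels disjoint from the ascending bulk. Since the base block has period $3$ and the closing absorbs the leftover, I expect a short casework modulo $6$: the residues $n \bmod 6 \in \{1,5\}$ (odd with $3 \nmid n$, where the inside edges form a single $n$-cycle of chords) are handled uniformly by this scheme, while the residue $3$ (odd with $3 \mid n$) needs separate attention, because there the inside edges split into three short cycles that must each be closed locally rather than through one long return.

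The hard part will be the seam, not the bulk. Near where the ascending run meets the closing primes, the three wrap-around inside edges $v_{n-2}v_1$, $v_{n-1}v_2$, $v_n v_3$ together with the wrap outside edge $v_n v_1$ must all be prime simultaneously, and these constraints genuinely couple the two edge types: a choice of returning prime that fixes an outside difference also shifts an inside difference, so they cannot be satisfied one at a time. Controlling this coupling---choosing the returning primes and the placement of the final few vertices so that every seam edge of both types is prime while no label repeats---is the crux of the argument, and the case $3 \mid n$, where the local closure of each short inside cycle plays the analogous role, is where I expect the most care to be required.
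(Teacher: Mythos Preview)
Your reduction to odd $n$ and the idea of a periodic block of small prime steps are both sound, and the paper begins the same way. From there the two routes diverge, and your proposal has a real gap at exactly the point you flag.

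The paper's construction is entirely explicit and uses no additive number theory. It takes the block $(3,3,5)$ rather than $(2,2,3)$, so the generic inside edge is $3+3+5=11$, and---crucially---it does not run monotonically up and then close with a few large primes. Instead it \emph{folds}: the labels ascend along roughly half of the outer cycle, pass through a short hand-built turnaround near the midpoint, and then descend along the other half back toward the start, with another short hand-built patch at the beginning and end. Because both halves use the same small steps, every inside edge away from the two bends equals $11$, and at the bends the finitely many exceptional edges are checked by hand. The case split is modulo $3$ (governing how the $(3,3,5)$ period aligns with the fold), not modulo $6$.

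Your Ramar\'{e}/Vinogradov closure, by contrast, controls only the outside differences at the seam, not the inside ones. If the last $k$ outside steps are $-p_1,\ldots,-p_k$, then the wrap-around inside edges force quantities such as $|p_k-4|$, $|p_{k-1}+p_k-2|$, and the transition sums $|d_{n-k-1}+d_{n-k}-p_1|$, $|d_{n-k}-p_1-p_2|$ to be prime simultaneously. Ramar\'{e}'s theorem says nothing about these; you have named the coupling but not broken it, and ``take the $p_i$ large'' makes these constraints harder, not easier (e.g.\ $|p_k-4|$ prime is a genuine condition on $p_k$, and $|p_1+p_2+p_3|$ prime is not something Ramar\'{e} delivers). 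This is not residual casework---it is the whole problem, and your outline gives no mechanism for it.

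Finally, your remark about $3\mid n$ is a structural fact about the graph (the length-$3$ chords split into three cycles) but it does not alter the labeling constraints: you still need $|d_i+d_{i+1}+d_{i+2}|$ prime for \emph{every} $i$, so there is no ``local closure'' of the three chord-cycles independently of the single vertex labeling. The paper's mod-$3$ split arises for a different reason---alignment of the periodic block with the fold---not from this decomposition.
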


\begin{proof}
By the previous Proposition, it suffices to consider $\Circ(2n+1,3).$  The labeling depends on the congruence class of $n$ modulo 3.
\bigskip

\noindent \textbf{Case 1. $n\equiv 0 \pmod 3.$}  In this case we label the vertices $v_1$, $v_2$, $v_3$, $v_4$, $v_5$, $v_6$, $\ldots$, $v_{3i+1}$, $v_{3i+2}$, $v_{3i+3}$, $\ldots$, $v_{n-3}$, $v_{n-2}$, $v_{n-1}$, $v_n$, $v_{n+1}$, $v_{n+2}$, $v_{n+3}$, $v_{n+4},$ $v_{n+5}$, $\ldots$, $v_{3j}$, $v_{3j+1}$, $v_{3j+2}$, $\ldots$, $v_{2n-3}$, $v_{2n-2}$, $v_{2n-1}$, $v_{2n}$, and $v_{2n+1}$ with the labels 4, 2, $-1$, 6, 9, 12, $\ldots$, $11i-5$, $11i-2$, $11i+1$, $\ldots$, $\f{11}{3}n-21$, $\f{11}{3}n-16$, $\f{11}{3}n-13$, $\f{11}{3}n-2$, $\f{11}{3}n+1$, $\f{11}{3}n+4$, $\f{11}{3}n-9$, $\f{11}{3}n-12$, $\f{11}{3}n-15$, $\ldots$, $\f{22}{3}n-11j+2$, $\f{22}{3}n-11j-1$, $\f{22}{3}n-11j-4$, $\ldots$, 13, 10, 7, 0, and $-3,$ respectively.  Note that the edges around the outside cycle appear in the pattern $3, 3, 5, 3, 3, 5, \ldots$, except for the edges between vertices $v_{2n-1}$ through $v_4$ and vertices $v_{n-1}$ through $v_{n+3}$.  Hence most inner edges are labeled 11.  The exceptions are easily verified to be prime also.
\bigskip

\noindent \textbf{Case 2. $n\equiv 1 \pmod 3.$}  In this case we keep the majority of the labels from Case 1, but change the labels of the vertices $v_{n-3}$, $v_{n-2}$, $v_{n-1}$, $v_n$, $v_{n+1}$, $v_{n+2}$, $v_{n+3}$, $v_{n+4},$ $v_{n+5}$, $v_{n+6}$, $\ldots$, $v_{3j}$, $v_{3j+1}$, and $v_{3j+2}$ to $\f{11(n-1)}{3}-16$, $\f{11(n-1)}{3}-13$, $\f{11(n-1)}{3}-10$, $\f{11(n-1)}{3}+7$, $\f{11(n-1)}{3}+10$, $\f{11(n-1)}{3}-7$, $\f{11(n-1)}{3}-4$, $\f{11(n-1)}{3}-9$, $\f{11(n-1)}{3}-12$, $\f{11(n-1)}{3}-15$, $\ldots$, $\f{22(n-1)}{3}-11j+10$, $\f{22(n-1)}{3}-11j+7$, and $\f{22(n-1)}{3}-11j+2$, respectively.  Again edges around the outside cycle appear in the pattern $3, 3, 5, 3, 3, 5, \ldots$, and the exceptions are easily verified to be prime also.
\bigskip

\noindent \textbf{Case 3. $n\equiv 2 \pmod 3.$}  In this case again we keep the majority of the labels from Case 1, but change the labels of the vertices $v_{n-3}$, $v_{n-2}$, $v_{n-1}$, $v_n$, $v_{n+1}$, $v_{n+2}$, $v_{n+3}$, $v_{n+4}$, $v_{n+5}$, $\ldots$, $v_{3j}$, $v_{3j+1}$, and $v_{3j+2}$ to $\f{11(n-2)}{3}-11$, $\f{11(n-2)}{3}-8$, $\f{11(n-2)}{3}-5$, $\f{11(n-2)}{3}+6$, $\f{11(n-2)}{3}+9$, $\f{11(n-2)}{3}+2$, $\f{11(n-2)}{3}-1$, $\f{11(n-2)}{3}-4$, $\f{11(n-2)}{3}-9$, $\ldots$, $\f{22(n-2)}{3}-11j+18$, $\f{22(n-2)}{3}-11j+13$, and $\f{22(n-2)}{3}-11j+10$, respectively.  Again the proof goes through as in Case 1.
\end{proof}

The proof of Theorem~\ref{n3-theorem} illustrates the difficulty in characterizing prime distance circulant graphs.  We believe this technique could be modified to prove $\Circ(n,k)$ is a prime distance graph for a fixed $k$ strictly between $2$ and $(n-1)/2$, but a new technique is needed to prove this for general $k$.
\bigskip

\begin{theorem} If $n>2$ and $n/2$ are even, then $\Circ(n,n/2)$ is a prime distance graph.
\end{theorem}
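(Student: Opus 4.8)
The plan is to recognize $\Circ(n,n/2)$ as the Möbius ladder on $n$ vertices: writing $m=n/2$, the graph is the outer $2m$-cycle $v_1v_2\cdots v_{2m}$ together with the perfect matching of antipodal ``rung'' edges $v_iv_{i+m}$. Since $m$ is even, I would try the simplest possible labeling, in which the two halves of the cycle are labeled by parallel arithmetic progressions. Concretely, set $L(v_i)=2(i-1)$ for $1\le i\le m$ and $L(v_{m+i})=2(i-1)+d$ for $1\le i\le m$, where $d$ is an odd prime to be chosen. Thus the first half receives the even labels $0,2,\dots,2m-2$ and the second half the odd labels $d,d+2,\dots,d+2m-2$.

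First I would check the routine conditions. The labels are distinct because the first half is even and the second half is odd (this is where $d$ odd is used). Every cycle edge internal to one of the two halves joins labels differing by $2$, and every rung $v_iv_{i+m}$ joins labels differing by exactly $d$; both $2$ and $d$ are prime. Hence all but two edges are automatically prime-labeled, and the whole construction reduces to the two edges joining the halves: $v_mv_{m+1}$, with $L(v_mv_{m+1})=|2(m-1)-d|$, and $v_{2m}v_1$, with $L(v_{2m}v_1)=2(m-1)+d$. (For example, $n=4$ gives $K_4$ labeled $0,2,5,7$, i.e.\ $d=5$, and $n=8,12$ admit $d=11,3$; I would check these base cases by hand to calibrate.)

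So the crux is purely number-theoretic: for each even $m$ I must exhibit an odd prime $d$ such that both $|2(m-1)-d|$ and $2(m-1)+d$ are prime; writing $N=2(m-1)$, I need $d$, $N-d$, and $N+d$ all prime (taking $d<N$; if instead $d>N$ this becomes a three-term progression $d-N,\,d,\,d+N$ of primes with common difference $N$). This is the main obstacle, and it is exactly a Hardy--Littlewood-type prime pattern, so I do not expect a one-line argument. I would attack it the way Theorem~\ref{n3-theorem} handles $\Circ(n,3)$: rather than insisting on common difference $2$, allow the two progressions an arbitrary prime common difference $k$ (replacing $2$ by $k$ throughout, and choosing $d$ not a multiple of $k$ so that distinctness survives, which for a prime $d\neq k$ is automatic). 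The requirement then becomes ``$k$ and $d$ prime with $(m-1)k\pm d$ prime,'' giving two free prime parameters, and the plan is to split into residue classes of $m$ and record, in each class, an explicit choice of $(k,d)$ (or a short sieve/counting argument) making the two wrap-around differences prime.

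The hard part, and the only genuinely delicate step, is thus establishing existence of these two wrap-around primes uniformly in $m$; everything else (the bulk of the edges, the distinctness, and the reduction via the rung matching) is immediate from the parallel-progression labeling. In particular, the parity separation handles distinctness for free and the matching structure collapses all $m$ inside edges to the single prime $d$, so the entire difficulty of the Möbius ladder is concentrated in exactly the two edges that close up the cycle.
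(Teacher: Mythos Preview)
Your reduction is clean and your bookkeeping is correct: with the parallel-progression labeling all rung edges get label $d$, all but two cycle edges get label $k$, and the whole problem collapses to making $(m-1)k-d$ and $(m-1)k+d$ simultaneously prime for some primes $k,d$. The gap is that this last step is not a lemma you can prove. Writing $M=(m-1)k$, you are asking for a prime $d$ with $M-d$ and $M+d$ both prime; equivalently, for a $3$-term arithmetic progression of primes with prescribed common difference $M$. Even allowing $k$ to vary over primes, this is a Goldbach/prime-constellation statement of exactly the kind that is currently out of reach, and ``splitting into residue classes of $m$'' cannot help, since primality of $(m-1)k\pm d$ is not determined by any residue of $m$: for each individual $m$ you still face two coupled primality constraints on a one-parameter family. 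Your two free edges simply do not carry enough degrees of freedom to reduce to a \emph{theorem} in analytic number theory rather than a \emph{conjecture}.

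The paper escapes precisely by loosening this rigidity. Instead of two special wrap-around edges, it reserves \emph{six} special vertices (three on each half, matched by the antipodal rungs) and runs the two bulk progressions with step $3$. The extra vertices create, on each side, a short path whose successive edge labels can be taken to be a large prime $p$ and the primes $q_1,q_2,q_3$ coming from a Vinogradov decomposition $p-3n/2+10=q_1+q_2+q_3$; the antipodal symmetry makes the rung differences equal to $2$. Because Vinogradov's three-primes theorem is \emph{proved}, the closing condition is now unconditional. The moral, which also underlies the paper's three proofs that cycles are prime distance graphs, is that you should widen the seam: use as many special vertices as the available additive result (Vinogradov, Ramar\'{e}, etc.) has summands, so that the wrap-around becomes a telescoping sum of those primes rather than a single intractable simultaneous-primality condition.
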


\begin{proof} $\Circ(4,2)$ and $\Circ(8,4)$ are easily shown to be prime distance, so assume $n\ge 12.$  Choose a prime $p$ sufficiently large such that by Vinogradov's Theorem, $p-3n/2+10$ can be expressed as a sum of three primes $q_1$, $q_2$, $q_3$.  That is, $p=3n/2+10+q_1+q_2+q_3$.

In counterclockwise order, we label $v_n$ with $0$, and $v_{n-1}$, $v_{n-2}$, $v_{n-3}$, $\ldots$, $v_{n/2+4}$ with $7$, $10$, $13$, $\ldots$, $3n/2-8$, respectively, label $v_{n/2}$ with $2$, and $v_{n/2-1}$, $v_{n/2-2}$, $v_{n/2-3}$, $\ldots$, $v_4$ with $5$, $8$, $11$, $\ldots$, $3n/2-10$, respectively.

Then we label $v_1$ with $p$, $v_2$ with $q_1+q_2+3k-10$, $v_3$ with $q_1+3k-10$, $v_{n/2+1}$ with $p+2$, $v_{n/2+2}$ with $q_1+q_2+3n/2-8$, and $v_{n/2+3}$ with $q_1+3n/2-8$.  Figure~\ref{middle-case} shows the case $n=10$ and $p=47$.

This gives a prime distance labeling of $G$.
\begin{figure}
\begin{center}\includegraphics[width=2.5in]{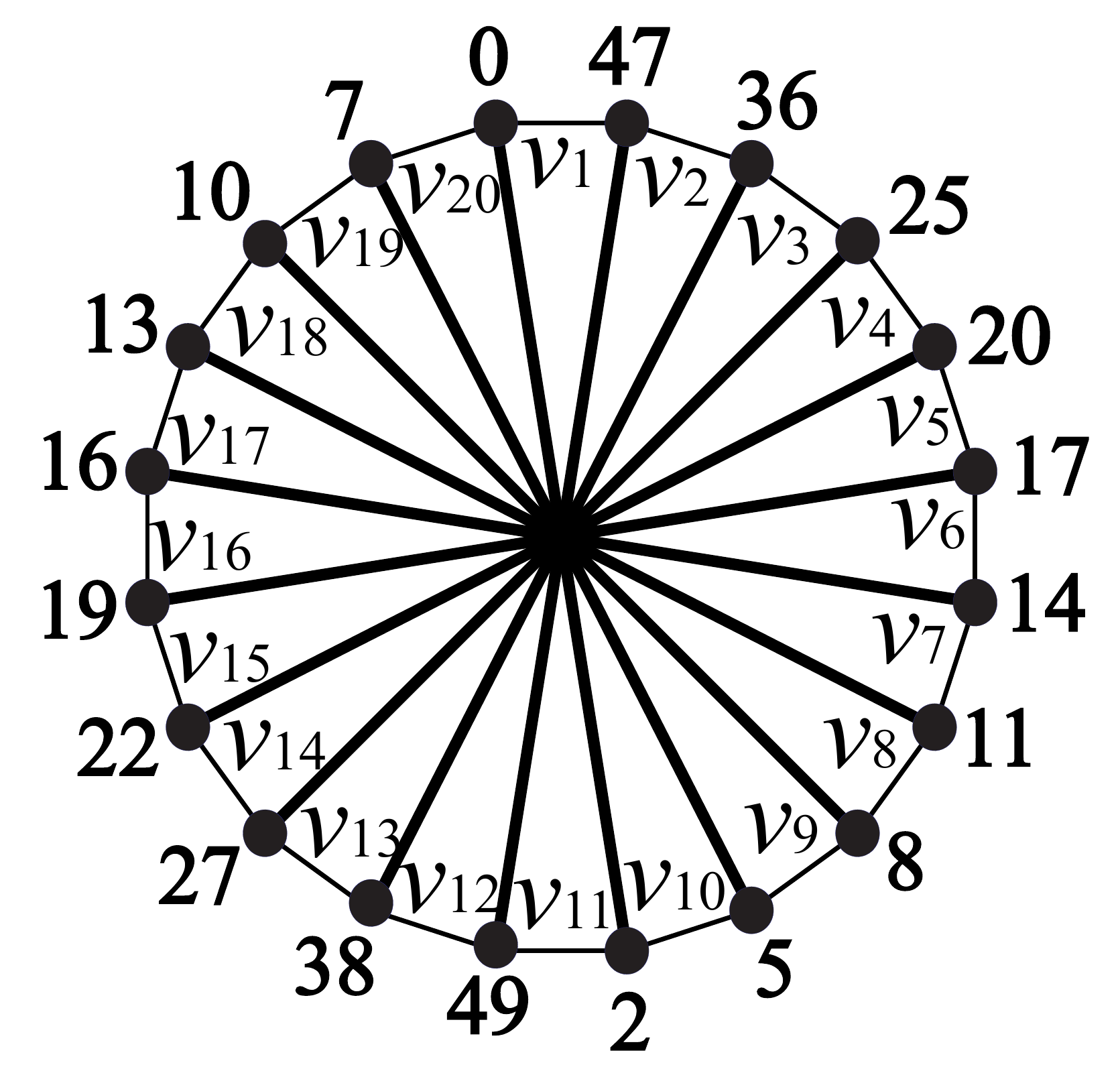}\end{center}
\caption{A prime distance labeling of $\Circ(20,10)$}\label{middle-case}
\end{figure}
\end{proof}

\section{Open Questions}

In addition to Conjecture~\ref{circ-conjecture}, we pose the following open questions.

\begin{enumerate}
\item Is there a family of graphs which are prime distance graphs if and only if Goldbach's Conjecture is true?

\item What circulant graphs $\Circ(n, S)$, for more general sets $S$, are prime distance graphs?

\item By our definition of prime distance graphs, $L(uv)$ may still be prime if $uv$ is not an edge of $G$.  How do the results of this paper change if we define $L(uv)$ to be prime if and only if $uv$ is an edge of $G$?

\item What other families of graphs are prime distance graphs?   More specifically, by Eggleton, Erd{\H{o}}s, and Skilton's result, all prime distance graphs have chromatic number at most 4, but not all planar graphs are prime distance graphs by the example shown in Figure~\ref{planar-not-PD}.  Can we classify which planar graphs are prime distance graphs?

\begin{figure}
\begin{center}\includegraphics[width=1.5in]{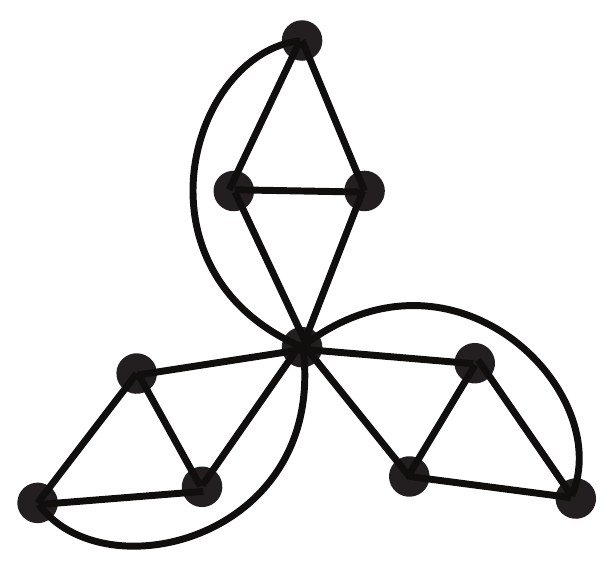}\end{center}
\caption{A planar graph which is not a prime distance graph.}\label{planar-not-PD}
\end{figure}

\end{enumerate}

\section{Acknowledgements}

The authors thank Xuding Zhu for suggesting the example in Figure~\ref{planar-not-PD}, and an anonymous reviewer for a careful and thorough reading of the paper.

\def\cprime{$'$}

\end{document}